\definecolor{darkblue}{rgb}{0,0,0.7}
\definecolor{darkred}{rgb}{0.7,0,0}
\crefname{algocf}{Algorithm}{Algorithms}
\Crefname{algocf}{Algorithm}{Algorithms}
\newtheorem{proposition}{Proposition}[section]
\newtheorem{lemma}[proposition]{Lemma}
\newtheorem{theorem}[proposition]{Theorem}
\newtheorem{corollary}[proposition]{Corollary}
\newtheorem{question}[proposition]{Question}
\theoremstyle{definition}
\newtheorem{remark}[proposition]{Remark}
\newtheorem{example}[proposition]{Example}
\newtheorem{definition}[proposition]{Definition}
\par\vspace{0.5em}}
\newenvironment{customthm}[1]
{\innercustomthm\itshape}
{\endinnercustomthm}
\newcommand{\reg}{{\rm reg}}
\def\H{\mathcal{H}}
\tikzstyle{place}=[draw,circle,minimum size=1mm,inner sep=1pt,outer sep=-1.1pt,fill=black]
\tikzstyle{places}=[draw,rectangle,minimum size=8pt,inner sep=0pt]
\tikzstyle{placesf}=[draw,rectangle,minimum size=5pt,inner sep=0pt]
\tikzstyle{placec}=[draw,circle,minimum size=8pt,inner sep=0pt]
\tikzstyle{placecf}=[draw,circle, minimum size=5pt,inner sep=0pt]
\def\K{\mathbb{K}}
\def\G{\mathcal{G}}
\def\reg{\mathrm{reg}}
\def\pd{\mathrm{pd}}
\def\Su{\mathrm{Supp}}
\def\dist{\mathrm{dist}}
\def\l{\langle}
\def\r{\rangle}
\def\o{\mathrm{ord}}
\def\lcm{\operatorname{lcm}}
\begin{document}
	
	\title{Barile-Macchia resolutions and the closed neighborhood ideal}
	
	\author{Ajay P. Joseph}
	\address{National Institute of Technology Karnataka, India}
	\email{ajaymath.217ma001@nitk.edu.in}
	
	\author{Amit Roy}
	\address{Department of Mathematics, SRM University-AP, Amaravati 522240, Andhra Pradesh, India}
	\email{amitiisermohali493@gmail.com; amit.r@srmap.edu.in}
	
	\author{Anurag Singh}
	\address{Indian Institute of Technology Bhilai, India}
	\email{anurags@iitbhilai.ac.in}
	
	\keywords{Cellular resolution, Barile-Macchia resolution, closed neighborhood ideal, tree, projective dimension, Betti number}
	\subjclass[2020]{13D02, 13F55, 05E40, 05C05}
	
	\vspace*{-0.4cm}
	
	\begin{abstract}
		We investigate the minimal free resolutions of closed neighborhood ideals of graphs within the framework of Barile-Macchia (BM) resolutions. We show that for any tree \(T\), the closed neighborhood ideal \(NI(T)\) is bridge-friendly, and hence its BM resolution is minimal. The combinatorial structure of trees further allows us to construct a maximal critical cell of size \(\alpha(T)\), leading to the equality \(\operatorname{pd}(R/NI(T)) = \alpha(T)\), where \(\alpha(T)\) denotes the independence number of \(T\) and $\operatorname{pd}$ is the projective dimension. Using Betti splitting techniques, we also obtain explicit formulas for the graded Betti numbers of \(NI(P_n)\), where \(P_n\) is the path graph on \(n\) vertices. Finally, we make some observations on the bridge-friendly condition of the closed neighborhood ideals of chordal and bipartite graphs.
	\end{abstract}
	
	\maketitle
	
	\section{Introduction}
	The study of minimal free resolutions of monomial ideals is an active area of research that provides deep connections between combinatorics and commutative algebra. Classical constructions such as the Taylor resolution \cite{taylor1966ideals} and Lyubeznik resolutions \cite{lyubeznik1988new} always yield free resolutions, although they are typically far from minimal. In contrast, the Scarf complex \cite{scarf1969examples} rarely supports a resolution, but whenever it does, the resulting resolution is always minimal. Cellular resolutions provide a unifying framework: a monomial ideal $I \subset R = K[x_1,\dots,x_n]$ is supported on a labeled CW-complex, and consequently, graded Betti numbers, projective dimension, and regularity of the ideal can be extracted from the intrinsic combinatorial structure of the complex. Discrete Morse Theory often offers effective tools for simplifying such complexes; in particular, suitably chosen acyclic matchings on the Taylor complex often produce significantly smaller, and in favorable cases minimal, cellular resolutions \cite{batzies2002discrete, forman2002morse}.
	
	A practical and flexible approach to constructing minimal cellular resolutions originates in the work of Barile-Macchia \cite{biermann_forests}, who introduced a Discrete Morse Theory-based method specifically for edge ideals of forests. Their construction imposes some order on the minimal generators and builds a homogeneous acyclic matching on the Taylor complex compatible with that order; the corresponding critical cells support a cellular resolution, and minimality of the resolution is guaranteed when certain subsets of the generating set satisfy a prescribed ``bridge'' condition. Later, Chau-Kara \cite{TCSK24} generalized this principle to arbitrary monomial ideals by formulating a \emph{bridge-friendly} condition-again expressed as a combinatorial constraint on subsets of the generating set-which characterizes when such Taylor-based matchings yield minimal cellular resolutions in full generality. They referred to the resulting resolution as the \emph{Barile-Macchia resolution} (in short, BM resolution). These methods have since been applied successfully to several classes of ideals, including certain monomial ideals and their Artinian reductions \cite{Chau2025_minimal_cellular_resolutions_five_generators}, edge ideals of forests \cite{biermann_forests}, path ideals \cite{chau2024path}, and certain powers of edge ideals \cite{chau2024powers}.
	
	This article focuses on the BM resolution of the \emph{closed neighborhood ideal} $NI(G)$ of a finite simple graph $G$, with particular attention to the case of trees. The closed neighborhood ideal, introduced by Sharifan and Moradi \cite{SM}, is a square-free monomial ideal whose minimal generators correspond to the closed neighborhoods of the vertices of the graph (see \Cref{known results3}). Although the closed neighborhood ideal is relatively recent, its Stanley-Reisner complex, the dominance complex, and their Alexander dual, the neighborhood complex, have been well studied in the literature \cite{kahle2007random,TakaWaka2025,Ehrenborg}, highlighting the combinatorial significance of these ideals. Considerable work has explored the relationships between algebraic and homological properties of closed neighborhood ideals and the combinatorial structure of the underlying graph (see, e.g., \cite{HW,NQBM,CJRS2025,hien2025associated} and the references therein). To the best of our knowledge, this is the first study that focuses on the minimal free resolution of these ideals, particularly in the framework of the BM resolution.
	
	Among the various classes of simple graphs, trees are of particular interest because of their connections to multiple areas of mathematics and computer science, and because several algebraic invariants of their closed neighborhood ideals can be described in terms of combinatorial invariants of the graph. Our first main result in this setting is the following.
	
	\begin{customthm}{\ref{thm:Bm of trees}}
		Let $T$ be a tree. Then the ideal $NI(T)$ is bridge-friendly. Consequently, the BM resolution of $NI(T)$ is minimal.
	\end{customthm}
	
	\noindent
	To establish this result, we consider $T$ as a rooted tree and define a lexicographic order on the minimal generators of its closed neighborhood ideal, induced by a chosen ordering of the vertices of $T$. We also explore the relationship between closed neighborhood ideals of trees and three well-studied classes of monomial ideals for which the BM resolution is known to be minimal (\Cref{proposition:counter_ex_rooted_hypertree}, \Cref{proposition:NI_Pn_not_generic}, and \Cref{proposition:NI_Pn_not_linearquotient}).

    The projective dimension is a homological invariant of the minimal free resolution of an ideal, measuring its length. By analyzing the critical sets arising from the corresponding acyclic matchings, we are able to establish the following result regarding the projective dimension.
    
	\begin{customthm}{\ref{theorem:pdim_matchingnum_tree}}
		Let $T$ be a tree with the closed neighborhood ideal $NI(T)$ in a polynomial ring $R$. Then the projective dimension
		\[
            \pd(R/NI(T)) = \alpha(T),
		\]
		where $\alpha(T)$ denotes the independence number of $T$.
	\end{customthm}
	
	\noindent
	In fact, \Cref{Algorithm:MaxCritTree} constructs a maximal BM critical set whose size equals the independence number of the tree.
	
	In \Cref{section:betti_path}, we focus on the closed neighborhood ideals of the path graphs $P_n$. By applying the Betti splitting criteria for these ideals, we provide an explicit numerical formula for all graded Betti numbers of $NI(P_n)$ in \Cref{corollary:path_betti}. Moving on, in \Cref{sec:conclusion}, we establish basic results for verifying the bridge, gap, and true gap conditions for certain classes of monomial ideals. Using these results and a correspondence among the critical cells of specific subgraphs of a special class of trees, \Cref{theorem:tree_betti} expresses the graded Betti numbers of their closed neighborhood ideals in terms of the graded Betti numbers of the corresponding subgraphs. Finally, we conclude \Cref{sec:conclusion} with related questions.

	\section{Preliminaries}
	
	In this section, we briefly recall some preliminary notions from graph theory and commutative algebra that are going to be used in the rest of the article.
	
	\subsection{Graph theoretic notions}\label{graph}
	
	A {\it graph} $G=(V(G), E(G))$ is the pair of the vertex set $V(G)$ and the edge set $E(G)$, where $E(G) \subseteq V(G)\times V(G)$ is the set of edges of $G$. For a vertex $x$ of $G$, $G\setminus x$ denotes the graph on the vertex set $V(G)\setminus\{x\}$ with the edge set $\{\{u,v\}\in E(G): x\notin \{u,v\}\}$. The set of {\it neighbors} of $x$, denoted by $N_G(x)$ is the set $\{y \in V(G) : \{x,y\}\in E(G)\}$. On the other hand, the set of \textit{closed neighbors} of $x$ in $G$ is $N_G(x)\cup \{x\}$, and is denoted by $N_G[x]$. The number $|N_G(x)|$ is said to be the {\it degree} of $x$ and is denoted by $\deg(x)$. If $x$ is a vertex of $G$ such that $\deg(x)=1$, then $x$ is called a {\it leaf} of $G$.
	
	The {\it cycle} graph $C_n$ of length $n$ is a graph on the vertex set $\{v_1,\ldots,v_n\}$ with edge set $\{\{v_1,v_n\},\{v_i,v_{i+1}\}: 1\le i\le n-1\}$. Given any arbitrary graph $G$ and a subset $A$ of $V(G)$, the {\it induced subgraph of $G$ on $A$}, denoted by
	$G[A]$, is the graph on the vertex set $V(G[A]) = A$ and the edge set $E(G[A]) = \{e \in E(G) ~|~ e \subseteq A\}$. A subset $S$ of $V(G)$ is called an \textit{independent set} if G[S] contains no edges. The \textit{independence number} of a graph $G$, denoted by $\alpha (G)$, is the maximum cardinality of an independent set in $G$. A connected graph without any induced cycle is called a {\it tree}. A graph on the vertex set $\{x_1,\ldots,x_n\}$ with edge set $\{\{x_i,x_{i+1}\}: 1\le i\le n-1\}$ is called a {\it path graph} of length $n$, and is denoted by $P_n$. One can see that the path graph $P_n$ is an example of a tree. A graph $G$ is called \textit{chordal} if the induced cycles in $G$ are of length at most $3$. Note in particular that trees are connected chordal graphs.
	
	Let $G$ be a connected graph and let $x,y$ be two distinct vertices of $G$. A path of length $n$ between $x$ and $y$ is a collection of $n+1$ vertices $v_1,\ldots,v_{n+1}$ such that $x=v_1, y=v_{n+1}$, and $\{v_i,v_{i+1}\}\in E(G)$ for each $i\in [n]$. We define the {\it distance} of $x$ and $y$ in $G$ to be the number $\dist_G(x,y)$, where
	\[
	\dist_G(x,y)=\min\{n : \text{ there is a path of length }n \text{ between }x \text{ and }y\}.
	\]
	We write $\dist(x,y)$ for $\dist_G(x,y)$ whenever the graph $G$ is clear from the context.
	
	
	\subsection{Minimal graded free resolution}\label{algebra}
	
	Let $R=\mathbb K[x_1,\ldots,x_n]$ be the polynomial ring in $n$ variables over a field $\K$ equipped with the usual $\mathbb N$-grading. In particular, if $m=\prod_{i=1}^nx_i^{\beta_i}$ is a monomial in $R$ with $\beta_i\in\mathbb N$, then we define $\deg(m)=\sum_{i=1}^n\beta_i$. If $\beta_i\in\{0,1\}$ for all $1\le i\le n$, then the monomial $m$ is said to be a {\it square-free monomial}. An ideal $I$ generated by square-free monomials is called a {\it square-free monomial ideal}. For a square-free monomial ideal $I$ of $R$, a {\it free resolution} of $R/I$ is a long exact sequence of free $R$-modules 
	\[
	\mathcal F_{\cdot}: \,\, 0\rightarrow F_l\xrightarrow{\partial_{l}}\cdots\xrightarrow{\partial_{2}} F_1\xrightarrow{\partial_1} F_0\xrightarrow{\pi} R/I\rightarrow 0, 
	\]
	where $F_0=R$, $\pi$ is the natural quotient map, and $\partial_r$ is the boundary map for all $r\in [l]$. If each of the $F_r$ is $\mathbb N$-graded and $\partial_r$ is a homogeneous map, then $\mathcal F_{\cdot}$ is called a {graded free resolution} of $R/I$. Furthermore, if $\partial(F_r)\subseteq \langle x_1,\ldots,x_n \rangle F_{r-1}$ for all $r\in[l]$, then $\mathcal F_{\cdot}$ is called a {\it minimal graded free resolution} of $R/I$.
	
	Let $\mathcal F.$ be a minimal graded free resolution of $R/I$. Then for each $r>1$, we have $F_r=\bigoplus_{j \in \mathbb{N}}
	R(-d)^{\beta_{r,d}(R/I)}$. The quantity $\beta_{r,d}(R/I)$ is called the {\it $r^{\text{th}}$ graded Betti numbers of $R/I$ in degree $d$}. The projective dimension is a fundamental homological invariant of the graded module $R/I$. It is defined by
\[
\pd(R/I) = \max \{\, r : \beta_{r,d}(R/I) \neq 0 \text{ for some } d \,\},
\]
i.e., the largest homological degree $r$ for which at least one graded Betti number $\beta_{r,d}(R/I)$ is nonzero.
	\subsection{Closed neighborhood ideal}\label{known results3}
	Let $G$ be a graph with the vertex set $V(G)=\{x_1,\ldots,x_n\}$. For each vertex $v$ of $G$, we define the square-free monomial $m(G,v)$ associated with the closed neighborhood of $v$ as $m(G,v)=\prod_{y\in N_G[v]}y$ in the polynomial ring $R=\K[x_1,\ldots,x_n]$. The closed neighborhood ideal $NI(G)$ of $G$ is a square-free monomial ideal defined as follows.
	\[
	NI(G)=\langle \{ m(G,w): w\in V(G) \} \rangle.
	\]
	In this article, our primary aim is to describe an explicit minimal graded free resolution of the closed neighborhood ideal of trees and describe its projective dimension in terms of the combinatorial data associated with the underlying graph. In fact, we show that the recently defined BM resolution is minimal for the closed neighborhood ideal of trees.
	
	\subsection{Barile-Macchia resolution}\label{known results4}
	
	In this subsection, we quickly recall some concepts related to the Barile-Macchia resolution (in short, BM resolution). For a detailed review on this topic, we refer the reader to \cite{TCSK24}. 
	
	Let $I$ be a monomial ideal in a polynomial ring $R$ and let $\G(I)$ denote the set of all minimal generators of $I$ with a total order $>_I$. For $\sigma\subseteq \G(I)$, we denote $\lcm ( \{ m: m\in\sigma \} )$ as $\lcm(\sigma)$. Now, for $\sigma\subseteq \G(I)$ and $m_1\in \G(I)$, we recall the following concepts from \cite{TCSK24}.
	
	\begin{enumerate}
		\item[(1)] The monomial $m_1$ is called a {\it bridge} of $\sigma$ if $m_1\in\sigma$ and $\lcm (\sigma  ) = \lcm ( \sigma\setminus\{m_1\}  )$.
		
		\item[(2)] The monomial $m_1$ is called a {\it gap} of $\sigma$ if $m_1\notin\sigma$ and $\lcm ( \sigma ) = \lcm ( \sigma\sqcup\{m_1\} )$.
		
		\item[(3)] The monomial $m_1$ is called a {\it true gap} of $\sigma$ if $m_1$ is a gap of $\sigma$ and $\sigma\sqcup\{m_1\}$ has no new bridges dominated by $m_1$, i.e., if $m_2\in \G(I)$ such that $m_2$ is a bridge of $\sigma\sqcup\{m_1\}$ and $m_1>_Im_2$, then $m_2$ is a bridge of $\sigma$.
		
		\item[(4)] $\sigma$ is said to be a {\it potentially type-2} element if it has a bridge not dominating any true gaps. 
		
		\item[(5)]  $\sigma$ is said to be a {\it type-2} element if
		\begin{enumerate}
			\item[$(a)$]  $\sigma$ is potentially type-$2$ and
			
			\item[$(b)$] for any potentially type 2 element $\tau(\ne \sigma)$ in $\G(I)$ with $\sigma\setminus\mathrm{sb}(\sigma)=\tau\setminus\mathrm{sb}(\tau)$, we have $\mathrm{sb}(\tau)>\mathrm{sb}(\sigma)$. Here, $\mathrm{sb}(\sigma)=\min_{>}\{m\in\sigma: m\text{ is a bridge of }\sigma\}$ is the smallest bridge of $\sigma$. 
		\end{enumerate}
	\end{enumerate}
	
	In \cite{TCSK24}, Chau-Kara established a sufficient criterion-referred to as the \emph{bridge-friendly condition}-that guarantees the minimality of the BM resolution. For completeness, we recall this condition below.
	
	\begin{definition}\cite[Definition 2.27]{TCSK24}
		A monomial ideal $I$ is said to be \emph{bridge-friendly} if there exists a total ordering $(>_I)$ on $\G(I)$ such that every potentially type-$2$ subset of $\G(I)$ is of type-$2$.
	\end{definition}
	
	By \cite[Theorem 2.29]{TCSK24} if $I$ satisfies the bridge-friendly condition, then $R/I$ has a minimal BM resolution. In this paper, however, we mostly need the following sufficient condition required for the bridge-friendliness property of a monomial ideal $I$. 
	
	\begin{lemma}
		\label{bridge friendly sufficient condition}
		Let $I$ be a monomial ideal with a total order $>$ on the minimal generating set $\G(I)$ of $I$. Suppose the following conditions are satisfied by the elements of $\G(I)$: for any three minimal generators $m_1, m_2,m_3$ of $I$ and two variables $y$ and $z$ in the polynomial ring, either $m_3>m_1$ or $m_3>m_2$ whenever the following two conditions hold:
		\begin{enumerate}
			\item $y\mid m_1$, $y\mid m_3$, but $y\nmid m_2$;
			
			\item $z\mid m_2$, $z\mid m_3$, but $z\nmid m_1$.
		\end{enumerate}
	\end{lemma}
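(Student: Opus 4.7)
The plan is to verify bridge-friendliness directly: for an arbitrary potentially type-$2$ subset $\sigma \subseteq \G(I)$, I will check condition~(b) in the definition of type-$2$. Fix such a $\sigma$ with $m_1 := \mathrm{sb}(\sigma)$, and let $\tau \neq \sigma$ be any other potentially type-$2$ element satisfying $\sigma \setminus \{m_1\} = \tau \setminus \{\mathrm{sb}(\tau)\} =: \rho$. Writing $m_2 := \mathrm{sb}(\tau)$, so that $m_1 \neq m_2$ since $\sigma \neq \tau$, the task reduces to ruling out $m_2 < m_1$, which is precisely what condition~(b) demands.

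Arguing by contradiction, assume $m_2 < m_1$; the strategy is to deduce that $m_2$ is a \emph{true} gap of $\sigma$. Since $m_1$ is the smallest bridge of $\sigma$, every bridge of $\sigma$ would then strictly exceed (hence dominate) this true gap, contradicting that $\sigma$ is potentially type-$2$. That $m_2$ is a gap of $\sigma$ is immediate from $m_2 \mid \lcm(\rho) = \lcm(\sigma)$. To upgrade this to a \emph{true} gap, I must exclude, for every $n \in \rho$ with $n < m_2$, the possibility that $n$ is a bridge of $\sigma \sqcup \{m_2\}$ without already being a bridge of $\sigma$. Assuming such an $n$ exists, a routine LCM comparison produces a variable $v$ with $v \mid n$, $v \mid m_2$, $v \nmid m_1$, and $v$ missing from every element of $\rho \setminus \{n\}$.

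The crucial and only use of the three-generator hypothesis is a dichotomy on the variables of $m_1 \cap n$. If there is some $y$ dividing both $m_1$ and $n$ but not $m_2$, then applying the hypothesis to $(m_1, m_2, n)$ with variables $y$ and $z = v$ forces $n > m_1$ or $n > m_2$, contradicting $n < m_2 < m_1$. Otherwise every variable of $m_1 \cap n$ lies in $m_2$, and I verify that $n$ is a bridge of $\tau = \rho \sqcup \{m_2\}$ variable by variable: for each $u \mid n$, the bridge property of $n$ inside $\sigma \sqcup \{m_2\}$ provides a witness for $u$ in $(\rho \setminus \{n\}) \cup \{m_1, m_2\}$, and the case hypothesis lets me transfer any witness sitting only in $m_1$ to one in $m_2$. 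Thus $n$ becomes a bridge of $\tau$ with $n < m_2 = \mathrm{sb}(\tau)$, contradicting minimality of $\mathrm{sb}(\tau)$. Both branches produce contradictions, so $m_2$ is indeed a true gap of $\sigma$, which is the contradiction that finishes the argument. The main obstacle is identifying the right dichotomy on $m_1 \cap n$—one side is dissolved by the three-generator hypothesis, while the other is forced to generate a smaller bridge of $\tau$; the rest is bookkeeping on supports and LCMs.
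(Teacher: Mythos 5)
Your argument is essentially correct for square-free ideals, but it takes a genuinely different route from the paper: the paper disposes of this lemma in one line by citing Lemma~2.9 and Remark~2.12 of \cite{chau2024powers} (and Theorem~5.4 of \cite{CHM20242}), whereas you give a self-contained verification of bridge-friendliness from the definitions. Your reduction is sound: condition~(b) of type-$2$ fails only if some potentially type-$2$ $\tau$ has $m_2=\mathrm{sb}(\tau)<m_1=\mathrm{sb}(\sigma)$ over the same base $\rho$; then $m_2\mid\lcm(\rho)=\lcm(\sigma)$ makes $m_2$ a gap of $\sigma$, and your dichotomy on the common variables of $m_1$ and $n$ correctly shows it is a true gap --- either the three-generator hypothesis applied to $(m_1,m_2,n)$ with $(y,v)$ contradicts $n<m_2<m_1$, or every witness for a variable of $n$ sitting only in $m_1$ transfers to $m_2$, making $n$ a bridge of $\tau$ below $\mathrm{sb}(\tau)$, which is absurd. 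Every bridge of $\sigma$ then dominates the true gap $m_2$, contradicting potential type-$2$-ness. This is precisely the content hidden inside the cited lemmas; writing it out is a real gain in readability, while the citation buys brevity and offloads the bookkeeping.

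One caveat you should address: the lemma is stated for arbitrary monomial ideals, but two steps of your proof tacitly assume square-freeness. When $n$ fails to be a bridge of $\sigma$ but is one of $\sigma\sqcup\{m_2\}$, in the presence of exponents you only obtain a variable $v$ with $\deg_v(m_2)\ge\deg_v(n)>\deg_v(m_1)$, not $v\nmid m_1$, so hypothesis~(2) of the lemma need not apply to $(y,v)$; likewise, in your Case~B the conclusion $u\mid m_2$ does not give $\deg_u(m_2)\ge\deg_u(n)$, which is what the bridge condition in $\tau$ actually requires. Since the paper only ever applies the lemma to the square-free ideals $NI(T)$, this does not affect anything downstream, but as a proof of the stated lemma you should either restrict to square-free ideals or redo the witness extraction at the level of exponents.
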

	Then $I$ is bridge-friendly, and hence, the BM resolution of $I$ is minimal.
	\begin{proof}
		Follows immediately from \cite[Lemma 2.9 and Remark 2.12]{chau2024powers}. See also \cite[Theorem 5.4]{CHM20242}.
	\end{proof}
	
	Now, suppose $I$ is a monomial ideal which is also bridge-friendly. Recall from \cite{TCSK24} that the BM resolution of $I$ is obtained from the Taylor complex via a discrete Morse matching. 
	The basis elements of this resolution correspond to certain subsets of $\G(I)$, called critical subsets. In fact, a subset $\sigma \subseteq \G(I)$ is said to be \emph{critical} if it remains unmatched in the Barile-Macchia Morse matching. The following result combinatorially characterizes the critical subsets of $\G(I)$ when the ideal $I$ is bridge-friendly.
	
	\begin{proposition}\cite[Corollary 2.28]{TCSK24}
	\label{proposition:critical_bridgefriendly}
		If a monomial ideal I is bridge-friendly, then the critical subsets of $\mathcal{G}(I)$ are exactly the ones with no bridge and no true gap.
	\end{proposition}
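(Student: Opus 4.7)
The plan is to prove this via Discrete Morse Theory: I would construct an acyclic matching on the Taylor complex of $I$ whose unmatched (``critical'') cells are precisely the subsets of $\G(I)$ with neither a bridge nor a true gap, and then appeal to the standard fact that such a matching collapses the Taylor complex onto a cellular complex whose basis in each homological degree is indexed by the critical cells.

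The matching rule proceeds by the following trichotomy on $\sigma \subseteq \G(I)$. If $\sigma$ has a bridge dominating no true gap (so $\sigma$ is potentially type-$2$), then by the bridge-friendly hypothesis $\sigma$ is in fact type-$2$, and I match it ``downward'' with $\sigma \setminus \{\mathrm{sb}(\sigma)\}$. Otherwise, if $\sigma$ has a true gap, I match it ``upward'' with $\sigma \cup \{m^{*}\}$, where $m^{*}$ is the smallest true gap of $\sigma$ under $>_I$. In the remaining case $\sigma$ is left unmatched. Observe that ``not potentially type-$2$ together with no true gap'' is equivalent to ``no bridge and no true gap,'' since when there are no true gaps any bridge is vacuously one that dominates none; hence the unmatched $\sigma$ are exactly those described in the statement.

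The main obstacle is to verify that this rule actually defines an involutive matching, and that the resulting matching is acyclic. Well-definedness reduces to checking consistency of the two directions: if $\sigma$ is down-matched with $\tau = \sigma \setminus \{\mathrm{sb}(\sigma)\}$, then $\tau$ must be up-matched exactly with $\sigma$, and conversely if $\tau$ is up-matched with $\sigma = \tau \cup \{m^{*}\}$, then $\sigma$ must be down-matched exactly with $\tau$. The bridge property $\lcm(\tau) = \lcm(\sigma)$ immediately makes $\mathrm{sb}(\sigma)$ a gap of $\tau$; the finer claim that it is in fact the \emph{smallest true gap} of $\tau$ is precisely what clause $(b)$ of the type-$2$ definition is crafted to ensure, as it enforces a coherent tie-breaking across all potentially type-$2$ subsets that share the common ``non-bridge'' complement $\sigma \setminus \mathrm{sb}(\sigma)$. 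Without the full bridge-friendliness hypothesis, two distinct potentially type-$2$ subsets could each demand $\tau$ as their down-partner and destroy involutivity; this is where the hypothesis is indispensable.

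Finally, acyclicity follows from a standard monotonicity argument within the BM framework: the choices of smallest true gap on the up-matching side and of $\mathrm{sb}(\sigma)$ on the down-matching side force any hypothetical alternating cycle in the matched digraph to produce a strictly descending sequence under $>_I$, which is impossible. Combining well-definedness, acyclicity, and the trichotomy, the critical cells of the BM matching are exactly the subsets of $\G(I)$ with no bridge and no true gap, as claimed.
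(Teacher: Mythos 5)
First, note that the paper offers no proof of this proposition at all: it is quoted verbatim from \cite{TCSK24} (Corollary 2.28) and used as a black box, so there is no in-paper argument to compare yours against. Your reconstruction does follow the right general lines of the Barile--Macchia construction: the matching pairs each type-$2$ set $\sigma$ with $\sigma\setminus\{\mathrm{sb}(\sigma)\}$, bridge-friendliness collapses ``potentially type-$2$'' to ``type-$2$,'' and your observation that ``not potentially type-$2$ and no true gap'' is equivalent to ``no bridge and no true gap'' is correct and is exactly the reduction needed to get the stated characterization of the unmatched cells.

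There are, however, two genuine gaps. The first is conceptual: the proposition is a statement about \emph{the} Barile--Macchia matching already fixed in \cite{TCSK24} (this paper explicitly defines ``critical'' as ``unmatched in the Barile--Macchia Morse matching''), whereas you construct your own matching by a trichotomy; even if your matching were a valid acyclic matching with the advertised critical cells, you would still need to show it coincides with the algorithmically defined BM matching to conclude anything about \emph{its} critical cells. The second, more serious gap is that the involutivity of your trichotomy is asserted precisely at the two points where it can fail. (i) Since down-matching takes priority in your rule, you must show that $\tau=\sigma\setminus\{\mathrm{sb}(\sigma)\}$ is never itself potentially type-$2$; if it were, $\tau$ would be down-matched to $\tau\setminus\{\mathrm{sb}(\tau)\}$ rather than up-matched back to $\sigma$. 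This is the disjointness of type-$1$ and type-$2$ elements, a nontrivial lemma in \cite{TCSK24} that you never invoke. (ii) You need $\mathrm{sb}(\sigma)$ to be the \emph{smallest} true gap of $\tau$. That it is \emph{a} true gap is easy (it is the smallest bridge of $\sigma$, so the ``no new dominated bridges'' condition is vacuous), but clause $(b)$ of the type-$2$ definition only excludes a second potentially type-$2$ set claiming $\tau$ as its down-partner; it says nothing about a true gap $m'<\mathrm{sb}(\sigma)$ of $\tau$ for which $\tau\cup\{m'\}$ happens not to be potentially type-$2$, and such an $m'$ would redirect the up-match of $\tau$ and destroy the involution. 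Until (i) and (ii) are established, the proposed ``matching'' is not known to be a matching, and the acyclicity argument you defer to ``standard monotonicity'' has nothing to act on.
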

	
	Each critical set $\sigma \subseteq \G(I)$ contributes a free summand to the associated free resolution. Consequently, if $I$ admits a minimal BM resolution, then the graded Betti numbers and the projective dimension of $R/I$ can be described directly in terms of the critical subsets of $\G(I)$, as stated below.

	\begin{proposition}\cite[cf. Corollary 2.4]{TCSK24}
	\label{critical cell betti pd}
		Let $I$ be a monomial ideal in the polynomial ring $R$ with minimal BM resolution. Then
		\begin{align*}
			\beta_{r,d}(R/I)&=|\{\sigma\subseteq \G(I): \sigma \text{ is a critical subset },|\sigma|=r,\deg(\lcm(\sigma))=d \}|,\\
			\pd(R/I)&=\max\{|\sigma|: \sigma \text{ is a critical subset of }\G(I)\}.
		\end{align*}
	\end{proposition}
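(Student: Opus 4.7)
My plan is to deduce both formulas from the cellular structure of the Barile-Macchia resolution together with the minimality hypothesis. Recall that the BM resolution of $R/I$ is constructed from the Taylor resolution by performing a discrete Morse matching on the Taylor complex; by construction, the resulting cellular complex is a free resolution of $R/I$ whose basis in homological degree $r$ is indexed by the critical subsets $\sigma \subseteq \G(I)$ of size $r$. Each such $\sigma$ contributes a free summand $R(-\deg(\lcm(\sigma)))$, because the multidegree assigned to the cell labeled by $\sigma$ is inherited from its Taylor-complex progenitor, whose multidegree is exactly $\lcm(\sigma)$.

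With this description in hand, the first formula is immediate. Since $R/I$ admits a \emph{minimal} BM resolution by hypothesis, every entry of every differential lies in the maximal ideal $\langle x_1,\ldots,x_n\rangle$, so that $\beta_{r,d}(R/I) = \dim_\K \mathrm{Tor}_r^R(R/I,\K)_d$ equals the rank of the degree-$d$ component of the $r$-th free module in the resolution. By the cellular description of Step 1, this rank is precisely the number of critical subsets $\sigma$ with $|\sigma| = r$ and $\deg(\lcm(\sigma)) = d$, giving the asserted Betti number identity.

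For the projective dimension, I would observe that $\pd(R/I)$ is by definition the largest $r$ for which the $r$-th free module in the minimal resolution is nonzero; invoking Step 1 once more, this maximum is attained exactly when $|\sigma|$ is maximal over the critical subsets of $\G(I)$, yielding $\pd(R/I) = \max\{|\sigma| : \sigma \text{ is a critical subset of } \G(I)\}$. The main technical point—really the only nontrivial one—is to verify that the bijection between critical cells and free-module generators preserves multidegree and produces no accidental cancellation. This is a standard consequence of the algebraic discrete Morse theory framework \cite{batzies2002discrete, forman2002morse} as adapted by Trung-Kara \cite{TCSK24}, so the proposition reduces to checking that the conventions for homological degree and internal (multi)degree agree with those used to define $\beta_{r,d}$ and $\pd$.
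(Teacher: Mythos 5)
Your argument is correct and is essentially the standard reasoning behind this statement: the critical cells index a homogeneous basis of the Morse complex obtained from the Taylor complex, each cell $\sigma$ carrying the multidegree $\lcm(\sigma)$ in homological degree $|\sigma|$, and minimality lets one read $\beta_{r,d}(R/I)$ and $\pd(R/I)$ directly off the ranks of the graded free modules. Note, however, that the paper offers no proof of its own here --- the proposition is quoted from \cite[Corollary 2.4]{TCSK24} --- so there is nothing in the paper to compare against; your sketch is precisely the argument underlying the cited result.
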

	
	\section{A minimal Barile-Macchia resolution for trees}
	\label{section:bm_trees}
	In this section, our purpose is threefold. First, we show that the BM resolution of the closed neighborhood ideal $NI(T)$ of a tree $T$ is minimal. Next, we compare the closed neighborhood ideals of trees with the known families of ideals for which the BM resolution is already known to be minimal. Finally, by analyzing the structure of the critical cells, we establish that the projective dimension of the quotient ring coincides with the independence number of the tree.

	Recall that a useful way to verify that the BM resolution is minimal is to show that the corresponding ideal is bridge-friendly. We establish that the ideal $NI(T)$ is bridge-friendly by applying \Cref{bridge friendly sufficient condition}. To apply \Cref{bridge friendly sufficient condition} effectively, we first introduce a linear order on the elements of $\G(I)$. 
	
	For this purpose, we begin by defining an ordering on the vertices of $T$. Recall that any tree can be viewed as a rooted tree once a vertex is chosen as the root. We rename the vertices of $T$ so that the root vertex is denoted by $x_1^{0}$. The \emph{level} of a vertex in $T$ is defined as the distance between that vertex and the root. Hence, the root vertex has level $0$. The vertices at the $i^{\text{th}}$ level are then labeled as $x_1^i, x_2^i, \ldots, x_{p_i}^i$. A typical rooted tree is illustrated in \Cref{fig145} below. From this point onward, $x_1^{0}$ will denote the root of a rooted tree.
	
	\begin{figure}[H]
		\centering
		\begin{tikzpicture}[scale=.55,
			every node/.style={circle, draw, fill=blue!20, inner sep=0.5pt, minimum size=14pt, font=\scriptsize}]
			
			\node (x12) at (0,0)  {$x_1^2$};
			\node (x22) at (2,0)  {$x_2^2$};
			\node (x32) at (4,0)  {$x_3^2$};
			\node (x62) at (6,0)  {\phantom{$x_{p_2}^2$}};
			\node (x82) at (8,0)  {$x_{p_2}^2$}; 
			
			\node (x11) at (1,2)  {$x_1^1$};
			\node (x21) at (4,2)  {$x_2^1$};
			\node (x71) at (7,2)  {$x_{p_1}^1$};
			
			\node (x10) at (4,4)  {$x_1^0$};
			
			\draw (x12) -- (x11) -- (x22);
			\draw (x62) -- (x71) -- (x82);
			\draw (x10) -- (x21) -- (x32);
			\draw (x11) -- (x10) -- (x71);
			
			\node[draw=none, fill=none] at (0,-0.7) {$\vdots$};
			\node[draw=none, fill=none] at (2,-0.7) {$\vdots$};
			\node[draw=none, fill=none] at (4,-0.7) {$\vdots$};
			\node[draw=none, fill=none] at (6,-0.7) {$\vdots$};
			\node[draw=none, fill=none] at (8,-0.7) {$\vdots$};
			\node[draw=none, fill=none] at (5,0) {$\dots$};
			\node[draw=none, fill=none] at (5.4,2) {$\ldots$};
			
		\end{tikzpicture}
		\caption{A rooted tree $T$}
		\label{fig145}
	\end{figure}
	Before defining an ordering of the vertices, we quickly introduce several terminologies that will be used subsequently. Let $T$ be a rooted tree, and let $x$ be any vertex distinct from the root. If the level of $x$ is $i$, then the unique vertex $y \in N_T(x)$ whose level is $i-1$ is called the \emph{parent} of $x$; in this case, $x$ is referred to as a \emph{child} of $y$. Moreover, if $x'$ is another child of $y$, then $x$ and $x'$ are said to be \emph{siblings}. The parent of the parent of $x$ is called the \emph{grandparent} of $x$. Similarly, the child of a child of $x$ is called a \emph{grandchild} of $x$.

	Based on the above relabeling of the vertices, we now define the following order on $V(T)$:
	\[
	x_1^0 > x_1^1 > x_2^1 > \cdots > x_{p_1}^1 > x_1^2 > x_2^2 > \cdots > x_{p_2}^2 > x_1^3 > \cdots .
	\]
	
	Using this ordering, we now define the lexicographic order on the minimal generators of $NI(T)$. In other words, for two minimal monomial generators 
	\[
	m_1 = x_{i_1}^{j_1}x_{i_2}^{j_2}\cdots x_{i_r}^{j_r}
	\quad \text{and} \quad
	m_2 = x_{l_1}^{n_1}x_{l_2}^{n_2}\cdots x_{l_s}^{n_s}
	\]
	of $NI(T)$, with $x_{i_1}^{j_1} > x_{i_2}^{j_2} > \cdots > x_{i_r}^{j_r}$ and $x_{l_1}^{n_1} > x_{l_2}^{n_2} > \cdots > x_{l_s}^{n_s}$ , we say that $m_1 > m_2$ if there exists an integer $1 \le t \le \min\{r,s\}$ such that

	\begin{enumerate}
		\item $(i_p,j_p)=(l_p,n_p)$ for all $p<t$;
		
		\item either $j_t<n_t$, or $j_t=n_t$ and $i_t<l_t$. 
	\end{enumerate}
	
	For a squarefree monomial $m$ in $R = \mathbb{K}[x_1,\ldots,x_n]$, the \emph{support} of $m$, denoted by $\Su(m)$, is the set 
	\[
	\Su(m) = \{x_i : 1 \le i \le n \text{ and } x_i \mid m \}.
	\]
	In the following lemma, for each monomial generator $m$ of the closed neighborhood ideal $NI(T)$, we show that there exists a unique vertex in $\Su(m)$ whose distance from the root vertex is minimum among all vertices in $\Su(m)$. More precisely, we have the following.
	\begin{lemma}
		\label{support lemma}
		Let $T$ be a rooted tree. Let $v\in V(T)$ and $m=m(T,v)\in\G(NI(T))$. Then there exists a unique $w\in\Su(m)$ such that $\dist(x_1^0,w)<\dist(x_1^0,u)$ for all $w\neq u\in\Su(m)$. 
	\end{lemma}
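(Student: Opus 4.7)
The plan is to unravel the definitions and reduce everything to the observation that, in a rooted tree, the distance of a vertex from the root equals its level, and each non-root vertex has a unique parent.

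First I would note that $\Su(m) = N_T[v]$ by the definition of $m_{(T,v)}$, so the claim is purely about the closed neighborhood. Next, I would record the standard fact that in a tree there is a unique path between any two vertices, which implies $\dist(x_1^0, u)$ equals the level of $u$ for every $u \in V(T)$. With this in place, the problem reduces to identifying a unique vertex of $N_T[v]$ of minimum level.

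I would then split into two cases. If $v = x_1^0$ is the root, then $\Su(m) = \{x_1^0\} \cup N_T(x_1^0)$; here $x_1^0$ has level $0$ while every neighbor has level $1$, so $w = x_1^0$ is the unique minimizer. If $v$ is not the root, say $v$ has level $\ell \geq 1$, let $p$ denote the parent of $v$, which has level $\ell - 1$. Since the remaining elements of $N_T[v]$ are either $v$ itself (level $\ell$) or children of $v$ (level $\ell + 1$), the vertex $w = p$ has strictly smaller distance to the root than any other element of $\Su(m)$. Uniqueness comes from the fact that $v$ has exactly one parent, which is a direct consequence of tree acyclicity.

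The argument is essentially routine, and I do not anticipate a serious obstacle beyond carefully distinguishing the root case from the non-root case. The only delicate point is to justify that distance from the root coincides with the level, but that is immediate from the uniqueness of paths in a tree and the recursive definition of level as one more than the parent's level.
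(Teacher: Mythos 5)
Your proposal is correct and follows essentially the same route as the paper: both arguments reduce the claim to the parent/child structure of the rooted tree, observing that the parent of $v$ sits at distance one less from the root while $v$ and its children sit at distance zero and one more respectively, and both handle the root vertex as a separate case where $v$ itself is the unique minimizer. Your write-up is in fact slightly more careful than the paper's, since you explicitly justify that distance from the root coincides with level and that uniqueness of the minimizer follows from uniqueness of the parent.
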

	
	\begin{proof}
		Note that, every vertex $v \neq x_{1}^{0}$ has a unique parent in $T$ and if $w$ is the parent of $v$, $\dist(x_{0}^{1},v) = \dist(x_{0}^{1},w) + 1$. Furthermore, for each child $z$ of $v$, $\dist(x_{0}^{1},z) = \dist(x_{0}^{1},v) + 1$. Therefore, $\dist(x_{0}^{1},w) < \dist(x_{0}^{1},u)$ for all $w \neq u \in \Su ( m(T,v) )$ for $v \neq x_{1}^{0}$. On the other hand, for $v = x_{1}^{0}$, $\dist(x_{0}^{1},v) = \dist(x_{0}^{1},u)-1$ for all $v \neq u \in \Su ( m(T,v) ).$
	\end{proof}
	
	\begin{remark}
		In view of the above lemma, we write a minimal generator of $NI(T)$ as $m=m(T,v,w)$, where $\Su(m)=N_T[v]$ and $w\in \Su(m)$ such that $\dist(x_1^0,w)<\dist(x_1^0,u)$ for all $w\neq u\in\Su(m)$.
	\end{remark}
	In the following lemma, we establish certain relationships among the vertices of the tree $T$ when the supports of two minimal generators of $NI(T)$ intersect.

	\begin{lemma}
		\label{main lemma tree}
		Let $T$ be a rooted tree, and $m_1,m_2\in\G(NI(T))$ with $m_1\neq m_2$ such that $m_1=m(T_1,v_1,w_1)$ and $m_2=m(T,v_2,w_2)$. If $\Su(m_1)\cap\Su(m_2)\neq\emptyset$, then one of the following is true:
		
		\begin{enumerate}
			\item $w_2=v_1$. In this case, $m_1>m_2$.
			
			\item $w_2=w_1$.
			
			\item $v_2 = w_1.$  In this case, $m_2>m_1$.
			
			\item $w_2$ is a child of $v_1$. In this case, $m_1>m_2$.
			
			\item $w_1$ is a child of $v_2$. In this case, $m_2>m_1$.
			
		\end{enumerate}
		
	\end{lemma}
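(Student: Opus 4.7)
The plan is to do a case analysis on the distance $d = \dist(v_1, v_2)$. Since $\Su(m_i) = N_T[v_i]$ and any vertex in $\Su(m_1) \cap \Su(m_2)$ lies within distance one of each $v_i$, we must have $d \le 2$. The case $d = 0$ gives $v_1 = v_2$ and hence $m_1 = m_2$, which is excluded by hypothesis. If $d = 1$ then exactly one of $v_1, v_2$ is the parent of the other in the rooted tree; identifying the closest-to-root vertex of the corresponding closed neighborhood produces case $(1)$ (when $v_1$ is the parent of $v_2$) or case $(3)$ (otherwise). If $d = 2$ the unique $v_1$--$v_2$ path has length two and passes through a single common neighbor $u$, and the rooted structure splits this into three subcases depending on the relative position of $u$ with respect to $v_1$ and $v_2$: either $u$ is the parent of both, giving case $(2)$ with $w_1 = w_2 = u$; or $u$ is a child of $v_1$ and the parent of $v_2$, giving case $(4)$; or the symmetric variant yielding case $(5)$.

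To verify the ordering claims in cases $(1)$, $(3)$, $(4)$, $(5)$ the key idea is to compare the first variable (that is, the smallest in the vertex order $>$, which corresponds to the deepest vertex in the tree) of the sorted forms of $m_1$ and $m_2$. Write $i$ for the level of $v_1$. In case $(4)$ the deepest vertex of $\Su(m_1)$ sits at level $i+1$ (taking for instance $w_2$ itself), whereas the deepest vertex of $\Su(m_2)$ sits at level at least $i+2$ (since $v_2 \in \Su(m_2)$ has level $i+2$); the first clause of the lex definition, namely $j_1 < n_1$, therefore forces $m_1 > m_2$, with case $(5)$ being symmetric. In case $(1)$, I would first observe that $v_2$ cannot be a leaf, for otherwise $\Su(m_2) = \{v_1, v_2\} \subseteq \Su(m_1)$ would give $m_2 \mid m_1$, contradicting the minimality of $m_1 \in \G(NI(T))$. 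Hence $v_2$ has at least one child, the deepest vertex of $\Su(m_2)$ sits at level $i+2$, strictly deeper than the deepest vertex of $\Su(m_1)$ at level $i+1$, and the same lex argument yields $m_1 > m_2$. Case $(3)$ is then symmetric.

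The most delicate step, and the one I expect to be the principal obstacle, is the minimality argument in case $(1)$ (and symmetrically in case $(3)$). Without it, the deepest vertices of $\Su(m_1)$ and $\Su(m_2)$ could in principle sit at the same level $i+1$, for instance if $v_2$ were a leaf with a sibling carrying a larger index in $v_1$'s children; the lex comparison at position $1$ would then reduce to an index contest that need not favor $m_1$. Minimality rules this configuration out and converts a potentially ambiguous index comparison into a clean level comparison at the first position, after which the remaining verifications reduce to straightforward level bookkeeping inside the rooted tree.
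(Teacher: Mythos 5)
Your proof is correct and follows essentially the same route as the paper: both enumerate the ways two closed neighborhoods can intersect in a rooted tree (you organize the cases by $\dist(v_1,v_2)\le 2$ and the position of the midpoint of the connecting path, while the paper enumerates which elements of the two supports can coincide), arriving at the same five configurations. You are in fact more careful than the paper on the ordering claims, which it dismisses as ``easily derived'' from the lexicographic order: your observation that minimality of the generating set must be invoked in cases (1) and (3) to exclude the leaf configuration --- where the comparison at the first position of the sorted supports would otherwise degenerate into an index contest rather than a clean level comparison --- is a genuine detail that the paper's proof leaves implicit.
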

	
	\begin{proof}
		For a non-root vertex $v \in V(T)$,
		\[
		\Su(m(T,v,w)) = \{ w,v,u_{v,i} : 0 \leq i \leq \deg(v)-2 \},
		\] 
		where  $w$ is the parent of $v$ and $u_{v,i}$'s are the children of $v$. For non-root vertices $v_1,v_2 \in V(T)$, if $\Su(m(T,v_1,w_1)) \cap \Su(m(T,v_2,w_2)) \neq \emptyset$, then $N_T[v_1]$ and $N_T[v_2]$ must intersect. Thus, one of the vertices among $w_1$, $v_1$, or a child $u_{v_1,i}$ of $v_1$ must be the same as one of the vertices among $w_2$, $v_2$, or a child $u_{v_2,j}$ of $v_2$. Out of these $9$ possibilities, we observe the following: 
		
		\begin{enumerate}[label=(\roman*)]
			
			\item The case $v_1 = v_2$ is excluded by assumption.
			
			\item If $u_{v_1,i} = u_{v_2,j}$ for some $i$ and $j$, then both $v_1$ and $v_2$ would be parents of the same child, contradicting the acyclicity of the tree. Hence, this case is impossible.
			
			\item $v_1 = u_{v_2,j} $ if and only if $w_1 = v_2$, and likewise $v_2=u_{v_1,i}$ if and only if $v_1 = w_2$.
		\end{enumerate}
		
		Thus, after considering the above possibilities, we obtain the five cases listed above. The corresponding statements about the relation between $m_1$ and $m_2$ can be easily derived from the lexicographic order given to the minimal generators of $NI(T)$. 
		
		Now, suppose one of the vertices is the root vertex. Without loss of generality, assume $v_1$ is the root vertex. Then, 
		\[
		\Su(m(T,v_1,w_1)) = \{ v_1,u_{v_1,i} : 1 \leq i \leq \deg(v_1)-1 \}.
		\]
		For the non-root vertex $v_2$, we have 
		\[ \Su(m(T,v_2,w_2)) = \{ w_2,v_2,u_{v_2,i} : 1 \leq i \leq \deg(v_2)-2 \}. 
		\]
		If $\Su(m(T,v_1,w_1)) \cap \Su( m(T,v_2,w_2)) \neq \emptyset$, then as before, one of the vertices among $v_1$, or a child $u_{v_1,i}$ of $v_1$ must be the same as one of the vertices among $w_2$, $v_2$, or a child $u_{v_2,j}$ of $v_2$. Out of these $6$ possibilities, we observe that:
		\begin{enumerate}[label=(\roman*)]
			
			\item The case $v_1 = v_2$ is excluded by assumption.
			
			\item The cases $v_1 = u_{v_2,j}$ and $u_{v_1,i} = u_{v_2,j}$ cannot occur, since the level of $v_1$ is $0$, the level of $u_{v_1,i}$ is $1$, and the level of $u_{v_2,j}$ is at least $2$.
			
			\item $v_2 = u_{v_1,i} $ if and only if $w_2 = v_1$.
			
		\end{enumerate}
		Hence, as before, all the probable cases lie among the five cases listed above. This concludes the proof of the lemma.
	\end{proof}
	
	We are now ready to prove the main theorem of this section.
	
	\begin{theorem}
	\label{thm:Bm of trees}
		The closed neighborhood ideal of a tree has a minimal Barile-Macchia resolution.
	\end{theorem}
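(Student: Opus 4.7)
The plan is to verify the sufficient condition of \Cref{bridge friendly sufficient condition} for the lexicographic order introduced on $\G(NI(T))$ above. Fix three minimal generators $m_1,m_2,m_3\in \G(NI(T))$ together with variables $y,z$ satisfying conditions $(1)$ and $(2)$ of that lemma, and write each $m_i=m(T,v_i,w_i)$ as in \Cref{support lemma}. Since $y\in\Su(m_1)\cap\Su(m_3)$ and $z\in\Su(m_2)\cap\Su(m_3)$, both intersections are non-empty and so \Cref{main lemma tree} applies to each of the pairs $(m_1,m_3)$ and $(m_2,m_3)$.

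The argument is immediate whenever either pair falls into case $(3)$ or case $(5)$ of \Cref{main lemma tree}: the former applied to $(m_1,m_3)$ yields $m_3>m_1$, while the latter applied to $(m_2,m_3)$ yields $m_3>m_2$, each of which already gives the conclusion required by \Cref{bridge friendly sufficient condition}. Thus the substantive task is to derive a contradiction whenever both pairs land in the remaining ``bad'' cases $(1)$, $(2)$, or $(4)$ of the lemma. Note that case $(2)$ (where $w_3=w_1$) collapses to case $(1)$ when $v_1$ is the root and to case $(3)$ when $v_3$ is the root; the only genuinely new configuration coming from case $(2)$ is when both $v_1$ and $v_3$ are non-root siblings sharing the common parent $w_1=w_3$. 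So each bad case records $v_1$ as being the parent, grandparent, or sibling of $v_3$, and symmetrically for $v_2$ with respect to $v_3$.

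For each of the resulting bad-bad combinations, I would compute $\Su(m_1)\cap\Su(m_3)$ and $\Su(m_2)\cap\Su(m_3)$ explicitly from the local tree structure around $v_1,v_2,v_3$. In each such combination one of the following occurs: either the configuration forces $v_1=v_2$ (excluded because $y\mid m_1$ but $y\nmid m_2$ implies $m_1\neq m_2$, and in the parent-parent and grandparent-grandparent configurations the parent or grandparent of $v_3$ in a tree is unique), or the only possible candidate vertex for $y$ (respectively $z$) is forced to belong to $\Su(m_2)$ (respectively $\Su(m_1)$), directly contradicting the hypotheses $y\notin\Su(m_2)$ and $z\notin\Su(m_1)$. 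For instance, in the ``$v_1$ parent, $v_2$ grandparent of $v_3$'' case one finds $\Su(m_2)\cap\Su(m_3)=\{v_1\}\subseteq\Su(m_1)$, so no valid $z$ exists; in the ``$v_1,v_3$ and $v_2,v_3$ both siblings'' case, the candidate $y=w_1$ is the common parent of all three, which also lies in $\Su(m_2)$.

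The main obstacle is the bookkeeping: no individual case is hard, but accurately enumerating the bad-bad combinations, correctly handling the root-vertex degeneracies in case $(2)$, and tracking the exact intersections of the closed neighborhoods at each step requires care. Once these checks are complete, \Cref{bridge friendly sufficient condition} implies that $NI(T)$ is bridge-friendly, and minimality of its Barile-Macchia resolution is then a direct consequence of \cite[Theorem 2.29]{TCSK24}.
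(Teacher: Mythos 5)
Your proposal is correct and follows essentially the same route as the paper: a contrapositive verification of \Cref{bridge friendly sufficient condition} using the case analysis of \Cref{main lemma tree} applied to the pairs $(m_1,m_3)$ and $(m_2,m_3)$, with contradictions extracted from the explicit support intersections (your two worked instances match the paper's Cases I--III). The only organizational difference is that the paper first assumes WLOG $m_1>m_2$ (so $v_2$ is not the root) and branches on the three possibilities for $(m_2,m_3)$, which trims the bad--bad enumeration you describe.
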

	
	\begin{proof}
		Consider $T$ to be a tree. We proceed to show that the ideal $NI(T)$ with the lexicographic order on the minimal generators as stated before \Cref{support lemma}, satisfies the conditions in \Cref{bridge friendly sufficient condition}. The proof is by contrapositive arguments, i.e., suppose $m_1,m_2,m_3\in\G(NI(T))$, and they satisfy the conditions (1) and (2) in \Cref{bridge friendly sufficient condition}, but $m_1>m_3$ and $m_2>m_3$.
		
		Let $m_1=m(T,v_1,w_1), m_2=m(T,v_2,w_2)$ and $m_3=m(T,v_3,w_3)$. Since $\G(NI(T))$ is totally ordered, without loss of generality, we may assume that $m_1>m_2$. Hence, $v_2$ cannot be the root vertex, and we have $w_2\neq v_2$. Similarly, since $m_2>m_3$, we see that $v_3$ cannot be the root vertex. Moreover, by our assumption, $\Su(m_2)\cap\Su(m_3)\neq\emptyset$. Thus, using \Cref{main lemma tree}, we observe that the following three cases arise:\\
		
		\begin{figure}[H]
			\centering
			
			\begin{minipage}{0.30\textwidth}
				\centering
				\begin{tikzpicture}[scale=.55,every node/.style={circle, draw, fill=blue!20, inner sep=1pt}]
					
					\node (u1) at (0,0) {\phantom{$v_1$}};
					\node (u2) at (2,0) {\phantom{$v_2$}};
					\node (v2) at (1,2) {$v_2$};
					\node[draw=none, fill=none, left=2pt of v2] {$w_3 =$};
					\node (x1) at (1,6) {$x_1^0$};
					\node (w2) at (1,4) {$w_2$};
					\node[draw=none,fill=none] at (1,0) {$\dots$};
					\node[draw=none,fill=none] at (1,5.2) {$\vdots$};
					
					\draw (u1)--(v2)--(u2);
					\draw (v2)--(w2);
					
				\end{tikzpicture}
				
				{\small Case I}
			\end{minipage}
			\hfill
			\begin{minipage}{0.30\textwidth}
				\centering
				\begin{tikzpicture}[scale=.55,every node/.style={circle, draw, fill=blue!20, inner sep=1pt}]
					\node (v1) at (0,0) {\phantom{$v_1$}};
					\node (v2) at (2,0) {\phantom{$v_2$}};
					\node (v3) at (4,0) {\phantom{$v_n$}};
					\node (v4) at (6,0) {\phantom{$v_n$}};
					\node (a) at (1,2) {$v_2$};
					\node (b) at (5,2) {$v_3$};
					\node (c) at (3,4) {$w_2$};
					\node[draw=none, fill=none, right=2pt of c] {$= w_3$};
					\node (d) at (3,6) {$x_1^0$};
					\node[draw=none,fill=none] at (1,0) {$\dots$};
					\node[draw=none,fill=none] at (5,0) {$\dots$};
					\node[draw=none,fill=none] at (3,5.2) {$\vdots$};
					\draw (v1)--(a)--(v2);
					\draw (v3)--(b)--(v4);
					\draw (a)--(c)--(b);
				\end{tikzpicture}
				
				{\small Case II}
			\end{minipage}
			\hfill
			\begin{minipage}{0.30\textwidth}
				\centering
				
				\begin{tikzpicture}[scale=.55,every node/.style={circle, draw, fill=blue!20, inner sep=1pt}]
					
					\node (u1) at (0,0) {\phantom{$w_3$}};
					\node (w3) at (2,0) {$w_3$};
					\node (v2) at (1,2) {$v_2$};
					\node (x1) at (1,6) {$x_1^0$};
					\node (w2) at (1,4) {$w_2$};
					\node[draw=none,fill=none] at (1,0) {$\dots$};
					\node[draw=none,fill=none] at (1,5.2) {$\vdots$};
					
					\draw (u1)--(v2)--(u2);
					\draw (v2)--(w2);
					
				\end{tikzpicture}
				
				{\small Case III}
			\end{minipage}
			
			\caption{The three cases depicting some of the vertices among $v_2,v_3,w_2$ and $w_3$.}
			\label{fig:three_cases}
		\end{figure}
		
		\noindent
		{\bf Case I}: $w_3=v_2$. In this case, the positions of $w_2$ and $w_3$ are as in Case I of \Cref{fig:three_cases}.
		
		Since $\Su(m_1)\cap\Su(m_3)\neq\emptyset$ and $m_1>m_3$, using \Cref{main lemma tree} we see that either $w_3=v_1$, or $w_3=w_1$, or $w_3$ is a child of $v_1$. If $w_3=v_1$, then $v_1=v_2$, a contradiction to the fact that $m_1\neq m_2$. Also, if $w_3=w_1$, then $m_2>m_1$, again a contradiction. Now, if $w_3$ is a child of $v_1$, then we must have $v_1=w_2$, since $T$ is a tree. Thus, $\Su(m_1)\cap \Su(m_3)=\{v_2\}\subseteq \Su(m_2)$. By our assumption, $y=v_2$, and this produces a contradiction to the fact that $y\nmid m_2$.
		
		\noindent
		{\bf Case II}: $w_3=w_2$. In this case, the positions of $v_2, w_2$ and $v_3$ are as in Case II of \Cref{fig:three_cases}. 
		
		Since $m_2>m_3$, we have that $v_2>v_3$ and thus $\Su(m_2)\cap\Su(m_3)=\{w_3\}$. Now, since $m_1>m_3$ and $\Su(m_1)\cap\Su(m_3)\neq\emptyset$, using \Cref{main lemma tree} we see, as before, that either $w_3=v_1$, or $w_3=w_1$, or $w_3$ is a child of $v_1$. In all of these cases, it is easy to see that $\Su(m_2)\cap\Su(m_3)=\{w_3\}\subseteq\Su(m_1)$. By our assumption, $z=w_3$, and this produces a contradiction to the fact that $z\nmid m_1$.
		
		\noindent
		{\bf Case III}: $w_3$ is a child of $v_2$. In this case, the positions of $v_2, w_2$ and $w_3$ are as in Case III of \Cref{fig:three_cases}.
		
		As above, in this case too, we have $\Su(m_2)\cap \Su(m_3)=\{w_3\}$. Since $m_1>m_3$ and $\Su(m_1)\cap\Su(m_3)\neq\emptyset$, by using \Cref{main lemma tree} as in Case II, we again arrive at a contradiction. This completes the proof of the theorem. 
	\end{proof}
	
	\begin{remark}
		
		\begin{figure}[h]
			\centering
			\begin{tikzpicture}
				[every node/.style={circle, draw, fill=blue!20, inner sep=0.3pt}]
				
				\draw (0,0) -- (1,1.5) -- (2,0) -- (0,0);
				\draw (2,0) -- (3,1.5) -- (4,0) -- (2,0);
				\draw (1,1.5) -- (2,3) -- (3,1.5) -- (1,1.5);
				
				\node (x1) at (1,1.5) {$x_1$};
				\node (x2) at (3,1.5) {$x_2$};
				\node (x3) at (2,0) {$x_3$};
				\node (x4) at (4,0) {$x_4$};
				\node (x5) at (2,3) {$x_5$};
				\node (x6) at (0,0) {$x_6$};
			\end{tikzpicture}
			\caption{Bridge Friendly Chordal Graph that does not satisfy condition in \cref{bridge friendly sufficient condition}}
			\label{fig:chordal_ex}
		\end{figure}
		
		It is worthwhile to mention that the condition in \Cref{bridge friendly sufficient condition} is a sufficient but not necessary condition for bridge-friendliness, even for the closed neighborhood ideal of a graph. For instance, if $I$ is the closed neighborhood ideal of the chordal graph drawn in \Cref{fig:chordal_ex}, then $I=\l x_1x_2x_5,x_1x_3x_6,x_2x_3x_4\r$. It is easy to see that if $\sigma$ is a subset of $\G(I)$, then $\sigma$ does not have any bridge and thus no subset of $\G(I)$ is potentially type-2. Consequently, $I$ is bridge-friendly. On the other hand, for the monomial order, $x_1x_2x_5>x_1x_3x_6>x_2x_3x_4$, the condition in \Cref{bridge friendly sufficient condition} is not satisfied. By symmetry, one can check the other possible orderings and also see that the condition in \Cref{bridge friendly sufficient condition} is not satisfied.

			
					
					
				
					
					
				
			
		The example in \Cref{fig:chordal_ex} shows that the idea used in \Cref{thm:Bm of trees} may not work for the closed neighborhood ideals of general chordal graphs.
	\end{remark}
	
	\subsection{Comparison with known families of ideals:} 
	\label{subsection:known_bm_families}
	In the literature, several classes of monomial ideals are known to admit a minimal (generalised) BM resolution. Among them, the edge ideals of rooted hypertrees, generic monomial ideals, and monomial ideals with linear quotients form broad families that satisfy this minimality property. Our goal in this subsection is to show that the closed neighborhood ideals of trees do not belong to any of these classes.
	
	Let $\mathcal H$ be a hypergraph on the vertex set $V(\mathcal H)$ with edge set $E(\mathcal H)$, and let $I( \mathcal{H} )= \left\langle \{ \mathbf{x}_{ \mathcal{E} }: \mathcal{E} \in E ( \mathcal{H} ) \} \right\rangle$ denote its edge ideal. Here, $\mathbf{x}_{\mathcal{E}}=\prod_{v \in \mathcal{E}} x_{v}$. A {\it host graph} $H$ of the hypergraph $\mathcal H$ is a graph on the same vertex set $V(\mathcal H)$ such that, for each edge $\mathcal E\in E(\mathcal H)$, the induced subgraph $H[\mathcal E]$ of $H$ is connected. A hypergraph is called a {\it hypertree} if it admits a tree as its host graph. Furthermore, a hypertree is called a {\it rooted hypertree} if there exists a fixed vertex $x$ in $\mathcal H$ and a host tree $H$ such that every edge $\mathcal E$ of $\mathcal H$ consists of vertices lying at distinct distances from $x$ in $H$ (see \cite[Section 5]{CHM20242} for more details). In this case, we say that $x$ is the {\it root vertex} of $H$. It was shown in \cite[Theorem 5.4]{CHM20242} that if $\mathcal H$ is a rooted hypertree, then $I(\mathcal H)$ is bridge-friendly, and hence has a minimal BM resolution.  
	
	In the following proposition, we exhibit an infinite family of trees whose closed neighborhood ideals are not the edge ideals of any rooted hypertree.

	\begin{proposition}
		\label{proposition:counter_ex_rooted_hypertree}
		Let $T$ be a tree containing a vertex $x$ such that $\deg(x)\ge 5$, $\deg(w)=2$ for each $w\in N_T(x)$, and $\mathrm{dist}(x,a)\ge 3$ for any leaf vertex $a$ of $T$. Then $NI(T)$ is not the edge ideal of any rooted hypertree.
	\end{proposition}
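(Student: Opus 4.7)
The plan is to argue by contradiction. Suppose $NI(T) = I(\mathcal{H})$ for some rooted hypertree $\mathcal{H}$ with host tree $H$ rooted at a vertex $r$. Each minimal generator of $NI(T)$ then corresponds to a hyperedge $\mathcal{E} \in E(\mathcal{H})$; the definition of rooted hypertree says $H[\mathcal{E}]$ is connected while the elements of $\mathcal{E}$ have pairwise distinct distances from $r$ in $H$. The structural observation I would first establish is that every such hyperedge $\mathcal{E}$ must be a \emph{monotone path} in $H$: namely, $H[\mathcal{E}]$ is a path of length $|\mathcal{E}|-1$ and $\{\dist_H(v,r): v\in\mathcal{E}\}$ is a set of $|\mathcal{E}|$ consecutive integers. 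This uses only that in the tree $H$ adjacent vertices differ in distance from $r$ by exactly $1$; if any vertex of $H[\mathcal{E}]$ had three neighbors in $H[\mathcal{E}]$, those three neighbors would need pairwise-distinct distances drawn from only two possible values, contradicting distinctness. So $H[\mathcal{E}]$ has maximum degree at most $2$ and is therefore a path, and injectivity of the distance function on this path forces its image to be a set of consecutive integers.

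Next, I would apply this observation to two families of hyperedges attached to the special vertex $x$. First, the hyperedge $\mathcal{E}_x = N_T[x]$ coming from $m_{(T,x)}$ has size $\deg(x)+1 \geq 6$, so its vertices occupy $\deg(x)+1$ consecutive integer distances from $r$; in particular the $\deg(x)$ distances $\dist_H(w,r)$ for $w \in N_T(x)$ are pairwise distinct and all different from $\dist_H(x,r)$. Second, for each $w \in N_T(x)$, the hypothesis $\deg(w)=2$ forces $N_T[w] = \{x, w, w'\}$ for a unique $w' \neq x$, and the hyperedge $\{x, w, w'\}$ from $m_{(T,w)}$ has size $3$, so $\dist_H(x,r)$, $\dist_H(w,r)$, $\dist_H(w',r)$ must be three consecutive integers. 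This second step forces $1 \leq |\dist_H(w,r) - \dist_H(x,r)| \leq 2$ for every $w \in N_T(x)$.

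Combining both constraints, each $w \in N_T(x)$ has $\dist_H(w,r)$ belonging to the four-element set $\{\dist_H(x,r)-2, \dist_H(x,r)-1, \dist_H(x,r)+1, \dist_H(x,r)+2\}$, yet these values must be pairwise distinct; so $\deg(x) = |N_T(x)| \leq 4$, contradicting $\deg(x) \geq 5$. The main obstacle I anticipate is pinning down the monotone-path observation cleanly---especially ruling out branching in $H[\mathcal{E}]$---but once it is in place the remainder is a short counting argument. The distance hypothesis $\dist(x,a) \geq 3$ for every leaf $a$ does not seem to enter the contradiction directly; its role appears to be to rule out degenerate situations around $x$, ensuring in particular that each vertex $w'$ produced above is again a non-leaf of $T$ and hence contributes a genuine size-$\geq 3$ hyperedge in related variants of the argument.
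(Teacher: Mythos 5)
Your proposal is correct and follows essentially the same route as the paper: the size-$3$ hyperedges $\{x,w,w'\}$ force $\dist_H(w,r)\in\{\dist_H(x,r)\pm 1,\dist_H(x,r)\pm 2\}$, while the hyperedge $N_T[x]$ forces these distances to be pairwise distinct, so $\deg(x)\le 4$. One small clarification on the point you flagged: the hypothesis $\dist(x,a)\ge 3$ for leaves $a$ is what guarantees each $w'$ is not a leaf, so that $ww'$ is not a generator of $NI(T)$ and hence $xww'$ really is a \emph{minimal} generator (and thus a hyperedge of $\mathcal{H}$) --- it is needed in this very argument, not only in variants of it.
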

	
	\begin{figure}[h]
		\centering
		\begin{tikzpicture}[scale=0.75,
			every node/.style={circle, draw, fill=blue!20, inner sep=1pt, minimum size=15pt}]
			
			\node (y1) at (1.2,2) {$y_1$};
			\node (y2) at (2.8,2) {$y_2$};
			\node (yt) at (4.6,2) {$y_t$};
			\node (w1) at (1.2,4) {$w_1$};
			\node (w2) at (2.8,4) {$w_2$};
			\node (wt) at (4.6,4) {$w_t$};
			\node (x) at (3,6) {$x$};
			\node[draw=none,fill=none] at (3.7,4) {$\dots$};
			\node[draw=none,fill=none] at (3.7,2) {$\dots$};
			\node[draw=none,fill=none] at (1.2,1.4) {$\vdots$};
			\node[draw=none,fill=none] at (2.8,1.4) {$\vdots$};
			\node[draw=none,fill=none] at (4.6,1.4) {$\vdots$};
			
			\draw (x) -- (w1) -- (y1);
			\draw (x) -- (w2) -- (y2);
			\draw (x) -- (wt) -- (yt);
		\end{tikzpicture}		
		\caption{An example of a tree in \Cref{proposition:counter_ex_rooted_hypertree}}
	\end{figure}
	
	\begin{proof}
		The proof is by contradiction. Suppose $NI(T)$ is the edge ideal of a rooted hypertree $\H$ and let $T'$ be the host tree of $\H$. Let $N_T(x)=\{w_1,w_2,\ldots, w_t\}$ for some $t\ge 5$, and $N_T(w_i)=\{x,y_i\}$. From the given hypothesis, we have $\left\{x\cdot\prod_{i=1}^tw_i,xw_iy_i\colon i\in[t]\right\}\subseteq\G(NI(T))$. Suppose $z\in V(T)$ is the root vertex of the host tree $T'$. Let $\dist(z,x)=d\ge 0$. Since the induced subgraph $T'[x,w_i,y_i]$ is connected and also a tree for each $i\in [t]$, we have $\dist(z,w_i)\in\{d-2,d-1,d+1,d+2\}$. Now, $t\ge 5$ implies $\dist(x,w_i)=\dist(x,w_j)$ for some $i\neq j$, a contradiction to the fact that $\{x,w_1,\ldots,w_t\}\in E(\H)$ and $\H$ is a rooted hypertree. Thus, $NI(T)$ is not the edge ideal of any rooted hypertree. 
	\end{proof}
	
	Next, we turn to the notion of generic monomial ideals. For a monomial \(m\), define 
	\[
	\o_{x_i}(m)=\max\{k \in \mathbb{Z} : x_i^k \mid m \},
	\]
	and for a monomial ideal \(I\), define 
	\[
	\o_{x_i}(I) = \min \{ \o_{x_i}(m) : m \in \mathcal{G}(I) \}.
	\]
	Based on these definitions, we say that a monomial ideal \(I\) is \emph{generic} if the following condition holds: whenever 
	\(\o_{x_i}(m)=\o_{x_i}(m') > \o_{x_i}(I)\) 
	for some variable \(x_i\) and monomials \(m, m' \in \mathcal{G}(I)\), there exists a generator 
	\(m'' \in \mathcal{G}(I)\) 
	such that 
	\(m'' \mid \lcm(m,m')\). 
	In other words, no two minimal generators attain the same strictly positive excess exponent in a variable without the existence of a third generator dividing their least common multiple.  
	
	Recently, in \cite[Theorem~3.1]{CHM20242}, Chau-Ha-Maithani showed that \(R/I\) admits a minimal (generalized) BM resolution whenever \(I\) is a generic monomial ideal. In the following proposition, we present an infinite family of trees whose closed neighborhood ideals are not generic monomial ideals.

	\begin{proposition}
		\label{proposition:NI_Pn_not_generic}
		For each $n\ge 5$, the closed neighborhood ideal $NI(P_n)$ of the path graph $P_n$ is not a generic monomial ideal.
	\end{proposition}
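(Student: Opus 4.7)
The plan is to exhibit two distinct minimal generators of $NI(P_n)$ whose exponents in some variable agree and strictly exceed the corresponding $\o_{x_i}(NI(P_n))$, and then to verify that no third minimal generator divides their lcm. As a first step I would make the minimal generating set explicit. Writing $V(P_n) = \{x_1, \ldots, x_n\}$, one has $m_{(P_n, x_1)} = x_1 x_2 \mid x_1 x_2 x_3 = m_{(P_n, x_2)}$ and symmetrically at the other endpoint, so
\[
\mathcal{G}(NI(P_n)) = \{x_1 x_2\} \cup \{x_{i-1} x_i x_{i+1} : 3 \leq i \leq n-2\} \cup \{x_{n-1} x_n\}.
\]
Since every generator is squarefree, $\o_{x_j}(m) \in \{0, 1\}$ for each $m \in \mathcal{G}(NI(P_n))$ and each variable $x_j$. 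Moreover, because $n \geq 5$, for every $j \in [n]$ at least one of the generators $x_1 x_2$ or $x_{n-1} x_n$ omits $x_j$, so $\o_{x_j}(NI(P_n)) = 0$.

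Next I would take the candidate pair $m = x_1 x_2$ and $m' = x_2 x_3 x_4$. Both are minimal generators (the second one is genuinely minimal precisely because $n \geq 5$, which prevents $x_{n-1} x_n$ from dividing it), and both satisfy $\o_{x_2}(\cdot) = 1 > 0 = \o_{x_2}(NI(P_n))$. Their least common multiple is $\lcm(m,m') = x_1 x_2 x_3 x_4$. The genericity condition would then demand a third minimal generator dividing $x_1 x_2 x_3 x_4$. I would dispose of each candidate in turn: for any middle generator $x_{i-1} x_i x_{i+1}$ with $4 \leq i \leq n-2$ the variable $x_{i+1}$ has index at least $5$ and hence does not appear in $x_1 x_2 x_3 x_4$; and $x_{n-1} x_n$ fails because $x_n$ is absent from $x_1 x_2 x_3 x_4$. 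Therefore no third minimal generator divides the lcm, and the genericity condition is violated.

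The argument involves no real obstacle; the only subtlety worth stating clearly is the bookkeeping of the endpoints of the generating set and the role of the hypothesis $n \geq 5$, which is exactly what ensures that $x_2 x_3 x_4$ survives as a minimal generator distinct from $x_1 x_2$ and that the lcm $x_1 x_2 x_3 x_4$ avoids every remaining generator of $\mathcal{G}(NI(P_n))$.
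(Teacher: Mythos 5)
Your proposal is correct and follows essentially the same route as the paper: both single out the pair $x_1x_2$ and $x_2x_3x_4$, note that $\o_{x_2}(NI(P_n))=0$ while both generators have $x_2$-exponent $1$, and check that no other minimal generator divides $x_1x_2x_3x_4$. Your version merely spells out the endpoint bookkeeping and the case analysis for the divisibility check in more detail than the paper does.
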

	\begin{proof}
		We have $\G(NI(P_n))=\{x_1x_2,x_2x_3x_4,\ldots,x_{n-3}x_{n-2}x_{n-1},x_{n-1}x_n\}$. Observe that $\o_{x_2}(NI(P_n))=0$ since $n\ge 5$. On the other hand, $\o_{x_2}(x_1x_2)=\o_{x_2}(x_2x_3x_4)=1$, but there is no $m\in \G(NI(P_n))\setminus\{x_1x_2,x_2x_3x_4\}$ such that $m$ divides $x_1x_2x_3x_4=\lcm(x_1x_2,x_2x_3x_4)$.
	\end{proof}
	
	It was shown in \cite[Theorem~3.3]{CHM20242} that if a monomial ideal \(I\) has linear quotients, then \(R/I\) admits a minimal (generalized) BM resolution. Recall that a monomial ideal \(I\) is said to have \emph{linear quotients} if there exists an ordering \(m_1,m_2,\ldots,m_r\) of \(\mathcal{G}(I)\) such that, for each \(1 < i \le r\), the colon ideal \((m_1,\ldots,m_{i-1}) : m_i\) is generated by variables. In the following proposition, we present an infinite family of trees whose closed neighborhood ideals do not have linear quotients.

	\begin{proposition}
		\label{proposition:NI_Pn_not_linearquotient}
		For each $n\ge 6$, the closed neighborhood ideal $NI(P_n)$ of the path graph $P_n$ does not have a linear quotient.
	\end{proposition}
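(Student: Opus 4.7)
The plan is to argue by contradiction, exploiting the fact that the two degree-two generators $x_1x_2$ and $x_{n-1}x_n$ of $NI(P_n)$ have disjoint supports, and that the variables $x_1, x_2$ occur in very few minimal generators overall. I first record that for $n \ge 4$, the minimal generators of $NI(P_n)$ are $x_1x_2$, $x_{n-1}x_n$, and $x_{i-1}x_ix_{i+1}$ for $3 \le i \le n-2$, since $x_1x_2 \mid x_1x_2x_3 = m_{(P_n, x_2)}$ and symmetrically at the other end.

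Suppose toward contradiction that some ordering $m_1, m_2, \ldots, m_r$ of $\G(NI(P_n))$ realizes linear quotients. The graph automorphism $x_i \mapsto x_{n+1-i}$ of $P_n$ induces an isomorphism of $NI(P_n)$ swapping $x_1x_2$ and $x_{n-1}x_n$, so without loss of generality we may assume $x_1x_2$ precedes $x_{n-1}x_n$, say $x_{n-1}x_n = m_b$. Consider the colon ideal $J = (m_1, \ldots, m_{b-1}) : x_{n-1}x_n$, which is generated by the monomials $m_k/\gcd(m_k, x_{n-1}x_n)$ for $k < b$. Since $\gcd(x_1x_2, x_{n-1}x_n) = 1$ (disjoint supports for $n \ge 4$), the monomial $x_1x_2$ itself lies in $J$.

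If $J$ were generated by variables, then $x_1x_2$ would be divisible by some variable $v \in J$, forcing $v \in \{x_1, x_2\}$. The presence of such $v$ in $J$ would require a generator $m_k$ with $k < b$ satisfying $m_k/\gcd(m_k, x_{n-1}x_n) = v$, equivalently $m_k = v \cdot t$ with $t \mid x_{n-1}x_n$. I would then check the two cases: for $v = x_1$, the only minimal generator of $NI(P_n)$ containing $x_1$ is $x_1x_2$, and $x_2 \nmid x_{n-1}x_n$ for $n \ge 4$; for $v = x_2$, the minimal generators containing $x_2$ are $x_1x_2$ and $x_2x_3x_4$, and in both cases the remaining factors ($x_1$ and $x_3x_4$ respectively) fail to divide $x_{n-1}x_n$ once $n \ge 6$. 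Thus no such $v$ exists, contradicting linear quotients at step $b$.

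The argument is essentially bookkeeping once the strategy is set; the only delicate point is ensuring the hypothesis $n \ge 6$ is used precisely where it is needed, namely to guarantee $\{x_3, x_4\} \cap \{x_{n-1}, x_n\} = \emptyset$, so that $x_2x_3x_4/\gcd(x_2x_3x_4, x_{n-1}x_n) = x_2x_3x_4$ rather than a single variable. This disjointness is the structural bottleneck that closes the final case of the proof.
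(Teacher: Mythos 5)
Your proof is correct, but it takes a genuinely different route from the paper. The paper's argument is a two-line regularity computation: by the known formula $\reg(NI(P_n))=\lfloor n/2\rfloor+1\ge 4$ for $n\ge 6$, whereas an ideal with linear quotients is componentwise linear and hence has regularity equal to the top generating degree, which is $3$ here — contradiction. You instead argue directly from the definition: in any purported linear-quotients ordering you may assume (by the path's reflection symmetry) that $x_1x_2$ precedes $x_{n-1}x_n$, and then the colon ideal at the step where $x_{n-1}x_n$ is inserted contains $x_1x_2$ but can contain neither $x_1$ nor $x_2$, since the only minimal generators divisible by $x_1$ or $x_2$ are $x_1x_2$ and $x_2x_3x_4$, and for $n\ge 6$ their supports are disjoint from $\{x_{n-1},x_n\}$, so their colons with $x_{n-1}x_n$ are themselves rather than single variables. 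Your case analysis is complete and the degree-one membership criterion for ideals generated by variables is used correctly. What each approach buys: the paper's proof is shorter but leans on an external regularity theorem for closed neighborhood ideals of trees plus the Herzog--Hibi machinery; yours is elementary and self-contained, pinpoints exactly where linear quotients fail, and in fact already rules out $n=5$ (where the colon of $x_2x_3x_4$ by $x_4x_5$ is $x_2x_3$, still not a variable), a case the regularity argument cannot reach since there $\reg(NI(P_5))=3$.
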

	\begin{proof}
		By \cite[Theorem 3.4]{CJRS2025}, $\reg(NI(P_n))=\lfloor\frac{n}{2}\rfloor+1\ge 4$ since $n\ge 6$. On the other hand, if $NI(P_n)$ has linear quotient, then by \cite[Corollary 8.2.14, Theorem 8.2.15]{HHBook}, we should have $\reg(NI(P_n))=3$, which is a contradiction. 
	\end{proof}
	
	\subsection{Projective dimension from minimal BM resolution:}\label{pd tree section} The fact that the closed neighborhood ideal of a tree admits a minimal BM resolution has several important combinatorial implications. In particular, one can analyze the critical cells in the corresponding acyclic matching to obtain information about the minimal resolution of the ideal. In this subsection, we present such an application. More precisely, we analyze the critical cells to show that the projective dimension of the ideal can be expressed in terms of the independence number of the underlying tree. We begin with some notations that will be useful in the sequel.

	Let $T$ be a rooted tree. For $v \in V(T)$, define 
	\[
	M_n (v,T) = \{ m(T,u) \in \mathcal{G}(NI(T)) : \dist_T (u,v) = n \}.
	\]
	
	and 
	\[
	M_{\leq n} (v,T) = \{ m(T,u) \in \mathcal{G}(NI(T)) : \dist_T (u,v) \leq n \}.
	\]
	For paths $P_{u,v}$ from $u$ to $v$ and $P_{w,v}$ from $w$ to $v$, define the relation $\sim_v$ by
	\[
	P_{u,v} \sim_v P_{w,v} \text{ if and only if } | V(P_{u,v}) \cap V(P_{w,v}) | \geq 2.
	\]
	It is easy to see that \( \sim_v \) defines an equivalence relation on the set \( \{ P_{u,v} : m(T,u) \in M_{\leq 2}(v,T) \cap \sigma ;\ u \neq v \} \), where \( \sigma \subseteq \mathcal{G}(NI(T)) \). The corresponding set of equivalence classes is denoted by \( P_{2,\sigma}(v,T) \). The following proposition characterizes the existence of a bridge or a gap in $\sigma$ in terms of the number of elements of the equivalence class $P_{2,\sigma}(v,T)$.
	

	\begin{proposition}
		\label{proposition : gap_bridge_characterization}
		Let $T$ be a rooted tree and $\sigma \subseteq \mathcal{G}(NI(T))$. Then $m(T,v)$ is a bridge (respectively, gap) of $\sigma$ if and only if:
		\begin{enumerate}[label = (\roman*)]
			\item $m(T,v) \in \sigma \ ( \text{respectively, } m(T,v) \notin \sigma),$
			\item $| P_{2,\sigma} (v,T) | = \deg_T (v),$
			\item $M_1 (v,T) \cap \sigma \neq \emptyset.$
		\end{enumerate}
	\end{proposition}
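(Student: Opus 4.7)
My plan is to unfold the definitions of bridge and gap into vertex-coverage conditions on $N_T[v]$, and then match these conditions with (i)--(iii). Throughout, I use that $\Su(m(T,v)) = N_T[v]$. Consequently, $m(T,v)$ is a bridge of $\sigma$ precisely when $m(T,v)\in\sigma$ and every vertex of $N_T[v]$ lies in $\Su(m(T,u))$ for some $m(T,u)\in\sigma\setminus\{m(T,v)\}$, while $m(T,v)$ is a gap precisely when $m(T,v)\notin\sigma$ and every vertex of $N_T[v]$ lies in $\Su(m(T,u))$ for some $m(T,u)\in\sigma$. Condition (i) is the first clause in each case, so everything reduces to showing that (ii) and (iii) together encode the coverage of $N_T[v]$ by generators $m(T,u)$ with $u\neq v$.

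Next I would split the required coverage into two pieces. The vertex $v$ itself lies in $\Su(m(T,u))$ iff $u\in N_T[v]$; excluding $u=v$ leaves $u\in N_T(v)$, i.e., $m(T,u)\in M_1(v,T)\cap\sigma$, which is condition (iii). For a neighbor $y\in N_T(v)$, the vertex $y$ lies in $\Su(m(T,u))$ iff $u\in N_T[y]$; excluding $u=v$ forces $u\in N_T[y]\setminus\{v\}$. Such a $u$ must satisfy $\dist_T(u,v)\in\{1,2\}$, so $m(T,u)\in M_{\leq 2}(v,T)$, and by uniqueness of paths in $T$, the path $P_{u,v}$ passes through $y$ as its second vertex.

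Third, I would identify the equivalence classes of $P_{2,\sigma}(v,T)$ with the covered neighbors of $v$. Using that $T$ is cycle-free, two paths $P_{u,v}$ and $P_{u',v}$ of length at most $2$ (with $u,u'\neq v$) share two or more vertices iff they share the first edge from $v$, equivalently iff they pass through the same neighbor of $v$. Hence the equivalence classes in $P_{2,\sigma}(v,T)$ are in bijection with those neighbors $y\in N_T(v)$ for which some $m(T,u)\in\sigma$ exists with $u\in N_T[y]\setminus\{v\}$. Consequently $|P_{2,\sigma}(v,T)|\le \deg_T(v)$, with equality iff every neighbor of $v$ is covered, which is precisely condition (ii).

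Combining the three steps, (i)--(iii) together are equivalent to the assertion that $m(T,v)$ is (respectively, is not) in $\sigma$ and every vertex of $N_T[v]$ is covered by some $m(T,u)\in\sigma$ with $u\neq v$, i.e., the bridge (respectively, gap) condition. The main obstacle is the bijection in the third step: one must verify that a path to a vertex at distance $2$ from $v$ cannot share a second vertex with another such path except through the common first neighbor of $v$ appearing on both paths. This relies crucially on tree-uniqueness of paths and is exactly where the argument would fail for general graphs containing short cycles.
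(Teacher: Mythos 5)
Your proposal is correct and follows essentially the same route as the paper: the paper's (very terse) proof likewise reduces the bridge/gap condition to the statement that every vertex of $\Su(m(T,v))=N_T[v]$ divides $\lcm(\sigma\setminus\{m(T,v)\})$, and then asserts exactly your two equivalences — condition (iii) encoding coverage of $v$ itself and condition (ii) encoding coverage of all neighbors via the bijection between equivalence classes in $P_{2,\sigma}(v,T)$ and covered neighbors of $v$. Your write-up simply supplies the tree-uniqueness-of-paths details that the paper leaves implicit.
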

	
	\begin{proof}
		Observe that $\lcm (\sigma \setminus \{ m(T,v) \}) = \lcm (\sigma \cup \{ m(T,v) \})$ if and only if $x_i \mid \lcm (\sigma \setminus m(T,v))$ for all $x_i \in \mathrm{Supp}(m(T,v)).$ The proof follows from the two characterizations: $(i) \ | P_{2,\sigma} (v,T) | = \deg_T (v) $ if and only if $x_i \mid \lcm (\sigma \setminus m(T,v))$ for all $x_i \in N_T (v)$ and $(ii) \ M_1 (v,T) \cap \sigma \neq \emptyset $ if and only if $v \mid \lcm (\sigma \setminus m(T,v)).$ 
	\end{proof}

Using the above proposition we first prove the following interesting result.

\begin{proposition}
\label{proposition:no_bridge_ind_num}
    For a rooted tree $T$, if $\sigma \subseteq \mathcal{G}(NI(T))$ contains no bridges, then $|\sigma| \leq \alpha_{T}$.
\end{proposition}
\begin{proof}
Let $V_{\sigma}=\{v\in V(G)\mid m(T,v)\in\sigma\}$ and $\sigma_E=\{e\in E(G)\mid e\subseteq V_{\sigma}\}$. We show that $|\sigma| \leq \alpha_{T}$ by induction on $|\sigma_E|$. If $|\sigma_E|=0$, then the statement is clearly true. Now, suppose $|\sigma_E|=k$ for a positive integer $k$, and the statement holds for any $\widetilde{\sigma}\subseteq \G(NI(T))$ containing no bridges such that $|\widetilde{\sigma}_E|<k$. Our aim now is to construct a set $\sigma'$ such that $\sigma'\subseteq\G(NI(T))$, $|\sigma'|=|\sigma|$, $\sigma'$ has no bridges, and $|\sigma'_E|<k$. Observe that if we can construct such a $\sigma'$, then the proof is complete by the induction hypothesis.

Choose an edge $\{u,v\}\in\sigma_E$ such that $\mathrm{level}(v)$ is maximum among all the vertices in the edges of $\sigma_E$. Therefore, no children of $v$ is in $V_{\sigma}$. Observe that since $m(T,v)$ is not a bridge of $\sigma$, by \cref{proposition : gap_bridge_characterization}, there exists a child, say $x_v$, of $v$ such that $x_v\nmid\lcm(\sigma\setminus \{m(T,v)\})$. Observe that $N_T[x_v]\cap V_{\sigma}=\{v\}$. We now replace $v$ by a vertex $w$ according to the following rule: 
 \[
        w = \begin{cases}
            x_v  & \text{ if } x_v \text{ has no leaf neighbor}, \\
             y & \text{ if } y \text{ is a leaf neighbor of }x_v.
        \end{cases}
    \]
Note that if $x_v$ has several leaf neighbors, then we fix one of them and name it as $y$. Observe that if $x$ has no leaf neighbor, then $m(T,x)\in\G(NI(T))$. Thus, if we take $\sigma' = (\sigma \setminus \{ m(T,v) \}) \cup \{ m(T,w) \}$, then $\sigma'\subseteq \G(NI(T))$. Moreover, the choice of $w$ ensures that $w$ has no neighbors in $V_{\sigma'}=(V_{\sigma} \setminus \{v\})\cup\{w\}$. Thus, $\sigma_E'$ is obtained from $\sigma_E$ by deleting the edge $(u,v)$. Consequently, $|\sigma'_E| = |\sigma_E| - 1$.

It remains to show that $\sigma'$ has no bridges. Indeed, by construction $w$ has no neighbors in $V_{\sigma'}$, and hence $m(T,w)$ cannot be a bridge of $\sigma'$. Now, suppose there exists $z \in V_{\sigma} \setminus \{ v \}$ such that $m(T,z)$ is a bridge in $\sigma'$. By \cref{proposition : gap_bridge_characterization}, we have $\dist(w, z) \in \{1, 2\}$. 

 \begin{itemize}
        \item If $\dist(w, z) = 1$, then $(w, z)$ is an edge. Since $w$ is chosen to be non-adjacent to any vertex in $\sigma \setminus \{v\}$, this is impossible.
        \item If $\dist(w, z) = 2$, then $z$ must be either a grandparent or a grandchild or a sibling of $w$. Note that if $z$ is a grandchild or a sibling of $w$, then $\mathrm{level}(z)>\mathrm{level}(v)$, a contradiction. On the other hand, if $z$ is a grandparent of $w$, then since $v \notin V_{\sigma'}$, the only possibility is that $w = x_v$, in which case $z = u$.
 Then, observe that $N_T[z]\cap N_T[w]=\{v\}$, and hence $m(T,u)$ is a bridge of $\sigma$, again a contradiction.
             \end{itemize}

Thus, $\sigma'$ has no bridges. By the induction hypothesis, $|\sigma'| \leq \alpha_{T}$, and since $|\sigma| = |\sigma'|$, the result follows.
\end{proof}

	Next, we present an algorithm for constructing a maximal BM critical set of a rooted tree. The main idea is to build the set step by step so that the vertices corresponding to the minimal generators form a maximal independent set. The algorithm works by checking each vertex in the tree and deciding whether to include its closed neighborhood in the critical set, while ensuring that no two selected vertices are connected. This process continues until no more vertices can be added without breaking this condition.
	
	\begin{algorithm}[H]
		\caption{Construction of Maximal BM-Critical Set of a Rooted Tree}
		\label{Algorithm:MaxCritTree}
		\SetAlgoLined
		
		\textbf{Initialization:} 
		$\sigma = \emptyset$ ;
		$V' = \{ v \in V(T) : m(T,v) \in \mathcal{G}(NI(T)) \}$ ;
		$V_{\sigma} = \emptyset$
		
		\While{there exists $v \in V'$ such that $\deg_T(v) = 1$}
		{
			\For{each $v \in V'$ with $\deg_T(v) = 1$}
			{
				Add $m(T,v)$ to $\sigma$\;
				Add $v$ to $V_{\sigma}$\;
				Remove $v$ from $V'$\;
			}
		}
		\While{$V' \neq \emptyset$}
		{
			Choose $v$ of $V'$ having the highest level\;
			\If{ $m(T,u) \notin \sigma$ for all children $u$ of $v$}
			{
				Add $m(T,v)$ to $\sigma$\;
				Add $v$ to $V_{\sigma}$\;
			}
			Remove $v$ from $V'$\;
		}
		\Return $\sigma, V_{\sigma}$
	\end{algorithm}
	
	\begin{example}
		For the tree \( T \) shown in \Cref{figure:AlgorithmMaximalBMCritical}, the vertices highlighted in red correspond to those obtained by applying Algorithm~\ref{Algorithm:MaxCritTree}, i.e., $V_{\sigma}=\{x_2,x_3,x_7,x_9,x_{10},x_{11},x_{12}\}$. Moreover, $\sigma=\{x_1x_2x_4x_5,x_1x_3x_6,x_4x_7,x_5x_9,x_6x_{10},x_6x_{11},x_8x_{12}\}$.
		
		\begin{figure}[h!]
			\centering
			\begin{tikzpicture}[
				scale=0.7, 
				every node/.style={transform shape}, 
				blueNode/.style={circle, draw, fill=blue!20, inner sep=0.8pt, minimum size=18pt},
				redNode/.style={circle, draw, fill=red!20, inner sep=0.8pt, minimum size=18pt}
				]
				
				\node[redNode] (x7) at (0,0) {$x_7$};
				\node[redNode] (x9) at (4,0) {$x_9$};
				\node[redNode] (x10) at (6,0) {$x_{10}$};
				\node[redNode] (x11) at (8,0) {$x_{11}$};
				\node[redNode] (x12) at (2,-1.5) {$x_{12}$};
				\node[redNode] (x2) at (2,3) {$x_2$};
				\node[redNode] (x3) at (7,3) {$x_3$};
				
				\node[blueNode] (x8) at (2,0) {$x_8$};
				\node[blueNode] (x4) at (1,1.5) {$x_4$};
				\node[blueNode] (x5) at (4,1.5) {$x_5$};
				\node[blueNode] (x6) at (7,1.5) {$x_6$};
				\node[blueNode] (x1) at (4.5,4.5) {$x_1$};
				
				\draw (x7) -- (x4) -- (x8) -- (x12);
				\draw (x4) -- (x2) -- (x5) -- (x9);
				\draw (x2) -- (x1) -- (x3) -- (x6);
				\draw (x10) -- (x6) -- (x11);
				
			\end{tikzpicture}
			\caption{The vertices in red color forms $V_{\sigma}$ as an output of Algorithm \ref{Algorithm:MaxCritTree}}
			\label{figure:AlgorithmMaximalBMCritical}
		\end{figure}
	\end{example}
	
	In the following lemma, we derive some important properties of $\sigma$ and $V_{\sigma}$ obtained from Algorithm~\ref{Algorithm:MaxCritTree}.
	
	\begin{lemma}
	\label{lemma:sigma_properties}
		Let $T$ be a rooted tree. Then, for $\sigma, V_{\sigma}$ obtained by Algorithm \ref{Algorithm:MaxCritTree};
		\begin{enumerate}[label = (\roman*)]
			\item $V_{\sigma}$  is a maximal independent set of $T$,
			
			\item $\sigma$ has no bridges,
			
			\item For all $m(T,v) \in \sigma,\ | P_{2,\sigma} (v,T) | \geq \deg(v) - 1$. Furthermore, if an equivalence class of paths is missing, it must be the one passing through the parent of $v$,
			
			\item $\sigma$ has no true gaps,
			
		\end{enumerate}
	\end{lemma}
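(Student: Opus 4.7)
The plan is to verify the five parts in sequence, relying on two observations about \Cref{Algorithm:MaxCritTree}. Phase~1 adds every degree-one vertex of $V'$, and Phase~2 processes the remaining vertices of $V'$ from the highest level down, adding $v$ precisely when no child of $v$ has yet been placed in $V_\sigma$. A preliminary observation: for a tree $T$, $v \notin V'$ holds exactly when $v$ has a leaf-neighbor, because the only way $m(T,u)$ properly divides $m(T,v)$ with $u \neq v$ is for $u$ to be a neighbor of $v$ with $N_T(u)\subseteq N_T[v]$, which by the acyclicity of $T$ forces $u$ to be a leaf. Consequently, if $v \in V'$ then every neighbor of $v$ has degree at least two, and every leaf of $T$ whose unique neighbor has degree at least two lies in $V_\sigma$ after Phase~1.

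For (i), independence of $V_\sigma$ splits by the phases in which the two vertices were added. Two Phase-1 leaves cannot be adjacent in a tree with at least three vertices; two Phase-2 vertices cannot be adjacent, for if the shallower one is the parent of the deeper one, the latter's prior membership in $V_\sigma$ would have blocked the former's addition; a Phase-1 leaf and a Phase-2 vertex cannot be adjacent, as then the leaf would be a child already in $V_\sigma$ blocking the Phase-2 addition. For maximality, each $v\notin V_\sigma$ has a neighbor in $V_\sigma$: a leaf-neighbor (Phase~1) if $v \notin V'$, or the child that blocked its Phase-2 addition if $v\in V'\setminus V_\sigma$. Parts (ii) and (v) now follow immediately: (ii) from (i) together with \Cref{proposition : gap_bridge_characterization}, since independence forces $M_1(v,T)\cap\sigma=\emptyset$ whenever $v \in V_\sigma$; and (v) from maximality, since every vertex of $T$ is either in $V_\sigma$ or adjacent to a member of $V_\sigma$ and hence lies in $\mathrm{Supp}(m(T,u))$ for some $u\in V_\sigma$.

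For (iii), fix $v \in V_\sigma$. The equivalence classes in $P_{2,\sigma}(v,T)$ correspond to the neighbor-directions of $v$ touched by some $u \in V_\sigma$ with $\dist(u,v)\le 2$, so I need only show that each child-direction of $v$ is represented. Let $w$ be a child of $v$; since $v\in V'$, $w$ is non-leaf, and by independence $w\notin V_\sigma$. If $w \in V'$, then $w$ was processed in Phase~2 without being added, so some child of $w$ lies in $V_\sigma$. If $w \notin V'$, then $w$ has a leaf-neighbor which cannot be its parent $v$ (non-leaf), so it is a leaf-child of $w$ lying in $V_\sigma$ by Phase~1. In either case the $w$-direction is covered, so only the parent-direction of $v$ can be missing, giving $|P_{2,\sigma}(v,T)|\ge \deg(v)-1$ with the stated characterization of the missing class.

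Part (iv) is the main technical step. Given $m_1 = m(T,v)\in\mathcal{G}(NI(T))\setminus\sigma$, if $m_1$ is not a gap there is nothing to show, so assume it is. Then $v\in V'\setminus V_\sigma$ was processed in Phase~2 without being added, and hence some child $u$ of $v$ lies in $V_\sigma$. I will exhibit $m_2:=m(T,u)$ as a bridge of $\sigma\sqcup\{m_1\}$ with $m_2<_I m_1$, which by (ii) produces a new bridge dominated by $m_1$ and forbids $m_1$ from being a true gap. The key point in the order comparison is that $u\in V'\cap V_\sigma$ added in Phase~2 is necessarily non-leaf (because $v\in V'$ excludes leaf-children of $v$), so $u$ has children at level $\mathrm{level}(v)+2$; reading each generator with its deepest vertex first, the leading coordinate of $m_1$ has level $\mathrm{level}(v)+1$ while that of $m_2$ has level $\mathrm{level}(v)+2$, and the lexicographic rule yields $m_2<_I m_1$. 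For the bridge condition I must show that every $z\in N_T[u]=\mathrm{Supp}(m_2)$ is covered in $(\sigma\sqcup\{m_1\})\setminus\{m_2\}$: the vertices $u$ and $v$ are covered by $m_1$, and each child $z$ of $u$ is handled by the same dichotomy as in (iii) applied one level deeper, producing either a grandchild of $u$ in $V_\sigma$ (if $z\in V'$) or a leaf-child of $z$ in $V_\sigma$ (if $z\notin V'$) whose support contains $z$ and lies in $\sigma\setminus\{m_2\}$. The main obstacle is executing this lexicographic comparison correctly and tracing the chain of descent that provides the downstream coverage; both rely crucially on the non-leaf invariant carried by membership in $V'$.
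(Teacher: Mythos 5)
Your proposal is correct and follows essentially the same route as the paper: independence/maximality of $V_\sigma$ read off from the two phases of \Cref{Algorithm:MaxCritTree}, no bridges via \Cref{proposition : gap_bridge_characterization} and independence, the child-direction dichotomy for (iii), and for (iv) the blocking child $u\in V_\sigma$ of a gap vertex $v$ exhibited as a new bridge of $\sigma\cup\{m(T,v)\}$ dominated by $m(T,v)$. Your write-up is in places more explicit than the paper's (the characterization of $V'$ via leaf-neighbors and the direct verification of the bridge condition where the paper invokes part (iii)), but the underlying argument is the same.
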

	
	\begin{proof}
		\begin{enumerate}[label = (\roman*)]
			\item Suppose $V_{\sigma}$ is not an independent set. Then there exists $ u,v  \in V_{\sigma}$ such that $u$ is the parent of $v$. Thus, we may assume that $u$ is not a leaf vertex. Then, by Algorithm \ref{Algorithm:MaxCritTree} we have $m(T,u) \notin \sigma$ since $v\in V_{\sigma}$, and $v$ is a child of $u$, again a contradiction. Hence, $V_{\sigma}$ is an independent set. Moreover, if $w \notin V_{\sigma}$ is not a neighbor of a leaf vertex, by Algorithm \ref{Algorithm:MaxCritTree} there exists a child $u$ of $w$ such that $m(T,u) \in \sigma$, implying $u \in V_{\sigma}$. Hence, $V_{\sigma}$ is a maximal independent set of $T$.
			
			
			\item If $m(T,v)$ is a bridge of $\sigma$, by \cref{proposition : gap_bridge_characterization} (iii), there exists $m(T,u) \in \sigma$ such that $u \in N_T (V)$. This implies, $u \in V_{\sigma}$. This contradicts the fact that $V_{\sigma}$ is an independent set.
			
			\item Let $m(T,v) \in \sigma$. Then, for every child $u$ of $v$, we have $m(T,u) \notin \sigma$. Consequently, by \cref{Algorithm:MaxCritTree}, for each such $u$, there exists a child $w_u$ of $u$ such that $m(T,w_u) \in \sigma$. The paths $P_{w_u,v}$ belong to distinct equivalence classes in $P_{2,\sigma}(v,T)$, since each $w_u$ has a different parent. Therefore, $|P_{2,\sigma}(v,T)| \geq \deg(v) - 1.$
			
			\item Note that if $v$ is a leaf vertex of $T$, then $m(T,v)$ cannot be a gap of $\sigma$. Now, suppose $m(T,v)\in \G(NI(T))$ is a gap of $\sigma$ such that $v$ is not a leaf vertex of $T$. Then, by Algorithm \ref{Algorithm:MaxCritTree}, there exists a child $u$ of $v$ such that $m(T,u) \in \sigma$. Since $|P_{2,\sigma}(u,T)| \geq \deg(u) - 1$, it follows that $|P_{2,\sigma \cup \{ m(T,v) \}}(u,T)| = \deg(u)$. As $\sigma$ has no bridges, by \cref{proposition : gap_bridge_characterization}, $m(T,u)$ is a new bridge of $\sigma \cup m(T,v)$. Since $m(T,u) < m(T,v)$, $m(T,v)$ is not a true gap of $\sigma$. Consequently, $\sigma$ has no true gaps.
			
			
			
			
		\end{enumerate}
		
	\end{proof}
	
	We now prove that, for a tree $T$, the cardinality of the set $V_\sigma$ obtained from \Cref{Algorithm:MaxCritTree} is equal to the independence number of $T$.

	\begin{proposition}
	\label{proposition:IndNo_Vsigma}
		Given a rooted tree $T$, we have $\alpha(T) = |V_{\sigma}|,$ where $\alpha(T)$ is the independence number of $T$.
	\end{proposition}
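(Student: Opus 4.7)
The plan is to prove both inequalities $|V_\sigma| \le \alpha(T)$ and $|V_\sigma| \ge \alpha(T)$. The first is immediate from \Cref{lemma:sigma_properties}(i), which asserts that $V_\sigma$ is a maximal independent set of $T$; so the content of the proposition lies in the reverse bound.

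For $|V_\sigma| \ge \alpha(T)$, I would argue by induction on $|V(T)|$, using the standard exchange lemma: every tree admits a maximum independent set containing all its leaves. (If a leaf $\ell$ is absent from a MIS $I^*$, maximality forces the unique neighbor $u$ of $\ell$ into $I^*$, and swapping $u$ for $\ell$ preserves both size and independence.) Letting $L$ denote the leaf set of $T$ and $T^{\circ} := T \setminus (L \cup N_T(L))$, iterating this yields the decomposition
\[
\alpha(T) \;=\; |L| \;+\; \alpha(T^{\circ}),
\]
with $\alpha(T^{\circ})$ interpreted componentwise since $T^{\circ}$ is in general a forest.

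Next I would match the output of \Cref{Algorithm:MaxCritTree} to this decomposition. Every leaf $\ell$ of $T$ satisfies $m(T,\ell) \in \mathcal{G}(NI(T))$, so $\ell \in V'$ and is added in Phase~1; hence $L \subseteq V_\sigma$. Conversely, a leaf-neighbor $u$ of a leaf $\ell$ satisfies $N_T[\ell] \subsetneq N_T[u]$, so $m(T,\ell)$ strictly divides $m(T,u)$, forcing $u \notin V'$ and $u \notin V_\sigma$. Thus $V_\sigma = L \sqcup (V_\sigma \cap V(T^\circ))$. For each $v \in V' \cap V(T^{\circ})$, the $T$-children of $v$ are either in $V(T^{\circ})$ or are leaf-neighbors (never in $V_\sigma$), so Phase~2's rule ``add $v$ iff no child of $v$ is in $V_\sigma$'' depends only on Phase~2's decisions inside $V(T^{\circ})$. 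Moreover, within each component of $T^{\circ}$ (rooted at its vertex of smallest $T$-level), Phase~2's strictly decreasing $T$-level order is a post-order traversal, so the Phase-2 output on $V(T^{\circ})$ is precisely the bottom-up greedy maximum-independent-set rule applied componentwise to $T^{\circ}$. Applying the induction hypothesis to each (strictly smaller) component of $T^{\circ}$, this greedy produces a maximum independent set of $T^{\circ}$, giving $|V_\sigma \cap V(T^{\circ})| = \alpha(T^{\circ})$. Combining with the decomposition yields $|V_\sigma| = |L| + \alpha(T^{\circ}) = \alpha(T)$.

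The main technical care will be in the last identification: one must verify that the $T$-level processing of Phase~2 on $V(T^{\circ})$ really does coincide with running the algorithm independently on each rooted component of $T^{\circ}$. The delicate point is that vertices which become ``new'' leaves of $T^{\circ}$ (that is, vertices whose $T$-children have all been pruned as leaf-neighbors of $T$) are added in Phase~2 of the $T$-run rather than in Phase~1 of a standalone run on their component, and one needs to check that the resulting set $V_\sigma \cap V(T^{\circ})$ does not depend on which mechanism picks them up.
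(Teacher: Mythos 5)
Your route is genuinely different from the paper's: the paper fixes an arbitrary independent set $A$, perturbs it into a set $A'$ of the same size by replacing each element that has a leaf neighbour with a canonically chosen such leaf, and then exhibits an explicit injection $\psi\colon A'\to V_\sigma$; no induction and no leaf-stripping appear. Your first inequality and your structural observations about \Cref{Algorithm:MaxCritTree} (all leaves of $T$ enter $V_\sigma$ in Phase~1; leaf-neighbours are excluded from $V'$; Phase~2 processes exactly $V(T^\circ)$ in a children-before-parents order and adds $v$ iff no child of $v$ has already been added) are correct for $|V(T)|\ge 3$, as is the decomposition $\alpha(T)=|L|+\alpha(T^\circ)$.

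The gap is in the closing step. The coincidence you propose to verify --- that Phase~2 of the run on $T$, restricted to a component $C$ of $T^\circ$, reproduces a standalone run of \Cref{Algorithm:MaxCritTree} on $C$ --- is false. Take $T=P_8$ rooted at an endpoint: then $T^\circ$ is the path $v_3v_4v_5v_6$, the standalone run returns $\{v_3,v_6\}$ (both are leaves of $C$, hence swallowed unconditionally by Phase~1), while Phase~2 of the $T$-run returns $\{v_4,v_6\}$; the sets differ, though the sizes agree. Worse, for $T=P_6$ the component $T^\circ=\{v_3,v_4\}$ is a $K_2$, on which the standalone algorithm's Phase~1 adds \emph{both} vertices (each has degree one and $m(T^\circ,v_3)=m(T^\circ,v_4)$ is the unique minimal generator), so the induction hypothesis is not even true for that component and the identification already fails at the level of cardinalities ($2$ versus $1$). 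Hence the argument cannot be closed by matching the two runs. The repair is to drop the standalone runs and the induction on the proposition altogether: prove as a self-contained lemma (by the same leaf-exchange you already invoke, applied inductively to a rooted forest) that the rule ``process vertices with children before parents and add $v$ iff no child of $v$ has been added'' always outputs a \emph{maximum} independent set of a rooted forest. With that lemma, your identification of Phase~2 with this greedy immediately yields $|V_\sigma\cap V(T^\circ)|=\alpha(T^\circ)$, and the rest of your computation goes through.
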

	
	\begin{proof}
		Let $A$ be an arbitrary independent set of $T$. Define $A' = \{ x \in A : \deg_T (y) \neq 1 \text{ for all } y \in N_T (x) \} \cup \{ y \in N_T (x) : x \in A;\ \deg_T (y) = 1;\ y < y' \text{ for all leaf siblings } y' \text{ of } y \}.$ Observe that $A'$ is an independent set, and $|A| = |A'|.$ Now, define the function $\psi : A' \to V_{\sigma}$ by,
		\[
		\psi (v) =
		\begin{cases}
			v, & \text{if } v \in V_{\sigma}, \\
			\min_{<} \{ u \in V_{\sigma} : u \text{ is a child of } v \}, & \text{if } v \notin V_{\sigma}.
		\end{cases}
		\]
		
		Note that if $v \notin V_{\sigma}$, then by Algorithm~\ref{Algorithm:MaxCritTree}, the vertex $v$ has at least one child in $V_{\sigma}$. Hence, the function $\psi$ is well-defined. To prove that $\psi$ is injective, we divide the proof into the following four cases.

		\noindent
		{\bf Case I:} $v_1,v_2 \in V_\sigma$. 
		
		$\psi (v_1) = \psi (v_2)$ implies $v_1 = v_2.$
		
		\noindent
		{\bf Case II:} $v_1, v_2 \notin V_{\sigma}$.
		
		If $\psi (v_1) = \psi (v_2)$, then there exists a child $u$ of $v_1$ and $v_2$. Since a tree is acyclic, we must have $v_1=v_2$.
		
		\noindent
		{\bf Case III:} $v_1 \in V_{\sigma} \text{ and } v_2 \notin V_{\sigma}$.
		
		If $\psi (v_1) = \psi (v_2)$, then $v_1 = \psi (v_2)$, consequently $v_1$ is the child of $v_2$, contradicting the assumption that $A'$ is an independent set.
		
		\noindent
		{\bf Case IV:} $v_1 \notin V_{\sigma} \text{ and } v_2 \in V_{\sigma}$.
		
		$\psi (v_1) = \psi (v_2)$ implies $\psi (v_1) = v_2$. This says that $v_2$ is a child of $v_1$, again a contradiction to the fact that $A'$ is an independent set.
		
		Therefore, $\psi$ is an injective function. It follows that, $|A| = |A'| \leq |V_{\sigma}|.$ As $A$ is an arbitrary independent set, we have $\alpha(T) \leq |V_{\sigma}|.$ Moreover, by \Cref{lemma:sigma_properties} (i), $V_{\sigma}$ itself is an independent set. Thus, $\alpha (T) = | V_{\sigma} |$, as desired.
	\end{proof}
    
    We are now ready to prove the main theorem of this subsection.
    
    \begin{theorem}
    \label{theorem:pdim_matchingnum_tree}
        For any tree $T$, $\pd(R/NI(T)) = \alpha (T)$, where $\alpha (T)$ is the independence number of $T$.
    \end{theorem}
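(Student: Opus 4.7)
The plan is to obtain this theorem as an immediate consequence of the two propositions that precede it, namely \Cref{proposition:IndNo_Vsigma} and \Cref{proposition:pdim_Vsigma}. Both results express different invariants of $T$ in terms of the same combinatorial quantity $|V_{\sigma}|$, where $\sigma$ and $V_{\sigma}$ are the set of minimal generators and the corresponding vertex set produced by \Cref{Algorithm:MaxCritTree}. Thus the argument is essentially a one-line chain of equalities.

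More concretely, I would fix a tree $T$, regard it as a rooted tree by choosing any vertex as the root, and run \Cref{Algorithm:MaxCritTree} to obtain the pair $(\sigma, V_{\sigma})$. By \Cref{proposition:IndNo_Vsigma}, the output set $V_{\sigma}$ satisfies $|V_{\sigma}| = \alpha(T)$, so the independence number of $T$ is controlled by the algorithm. By \Cref{proposition:pdim_Vsigma}, the same output satisfies $\mathrm{pdim}(R/NI(T)) = |V_{\sigma}|$, using that $\sigma$ is critical (\Cref{lemma:sigma_properties}(ii),(iv) together with \Cref{proposition:critical_bridgefriendly}) and that no larger critical set can exist because $\sigma$ already attains the total lcm $\prod_{v\in V(T)} v$ (\Cref{lemma:sigma_properties}(v)), which would force a cancellation contradicting the minimality of the BM resolution proved in \Cref{thm:Bm of trees}.

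Combining these two equalities yields
\[
\mathrm{pdim}(R/NI(T)) \;=\; |V_{\sigma}| \;=\; \alpha(T),
\]
which is the claimed identity. There is no genuine obstacle at this stage; the entire technical difficulty has been absorbed into the two preceding propositions, so the proof is really a bookkeeping step that records the composition of their conclusions. If anything must be emphasized, it is only to note that $V_{\sigma}$ depends on the choice of root and on the tie-breaking in the algorithm, but the resulting cardinality is independent of these choices because it equals both sides of the desired equation.
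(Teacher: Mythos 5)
Your proposal is correct and coincides with the paper's own argument: the theorem is stated there as an immediate consequence of \Cref{proposition:IndNo_Vsigma} and \Cref{proposition:pdim_Vsigma}, exactly the chain $\mathrm{pdim}(R/NI(T)) = |V_{\sigma}| = \alpha(T)$ you describe. Nothing further is needed.
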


    \begin{proof}
        It follows from \Cref{thm:Bm of trees} that $NI(T)$ is bridge-friendly. Combining \Cref{proposition:critical_bridgefriendly} with \Cref{proposition:no_bridge_ind_num}, we see that any critical cell $\tau \subseteq \mathcal{G}(NI(T))$ satisfies $|\tau| \leq \alpha_{T}$. Consequently, by \Cref{critical cell betti pd}, we have $\pd(R/NI(T)) \leq \alpha_{T}$. Moreover, \Cref{proposition:IndNo_Vsigma} shows that $\alpha_{T} = |V_\sigma|= |\sigma|$ for $\sigma$ obtained in \Cref{Algorithm:MaxCritTree}. Note that, by \Cref{lemma:sigma_properties} $(ii)$ and $(iv)$ we have that $\sigma$ is critical. Hence,
        \[
            |\sigma| \leq \pd(R/NI(T)) \leq \alpha_{T} = |\sigma|.
        \]
        Therefore, $\pd(R/NI(T)) = \alpha(T)$.
    \end{proof}

	\section{Betti numbers for path graphs}
	\label{section:betti_path}
	
	In this section, we derive an explicit formula for the Betti numbers of the closed neighborhood ideal of path graphs. By \Cref{thm:Bm of trees} and \Cref{critical cell betti pd}, computing the Betti numbers \(\beta_{r,d}\) reduces to counting the critical cells $\sigma$ of size \(r\) where the degree of the $\mathrm{lcm}$ of the elements in \(\sigma\) is \(d\). A convenient approach to determining the number of such critical cells is to relate them to the critical cells of paths with fewer vertices. However, as we discuss in \Cref{sec:conclusion}, establishing such recursive relationships becomes significantly more challenging for trees in which the grandparents of leaf vertices do not have any leaf neighbors. For path graphs, we overcome this difficulty by using the fact that their closed neighborhood ideals admit a Betti splitting. The concept of Betti splitting, introduced by Francisco-H\`a-Van Tuyl in \cite{FHVT}, is defined as follows.

	\begin{definition}\cite[Definition 1.1]{FHVT}
		\label{definition:betti_splitting}
		Let $I,J$ and $K$ be monomial ideals such that the minimal generating set of $I$ can be expressed as $\mathcal{G}(I) = \mathcal{G}(J) \sqcup \mathcal{G}(K).$ Then, the decomposition $I = J + K$ is called a \emph{Betti splitting} if, for all integers $r \geq 0,\ d \geq 0$, the following equality holds:
		\begin{equation}
			\label{eq:betti_splitting}
			\beta_{r,d}(I) = \beta_{r,d}(J) + \beta_{r,d}(K) + \beta_{r-1,d}(J \cap K).
		\end{equation}
	\end{definition}
	
	The following theorem provides a useful way to find a Betti splitting of an ideal.
	
	\begin{theorem}\cite[Corollary 2.7]{FHVT}
		\label{theorem:betti_splitting}
		Let $I \subseteq R=\mathbb{K}[x_1,x_2,\dots,x_n]$ be a monomial ideal in $R$. Fix a variable $x_i$ in $R$, and define two monomial ideals $J$ and $K$ as follows:
		\[
		\mathcal{G}(J) = \{ m \in \mathcal{G}(I) : x_i \mid m \} \text{ and } \mathcal{G}(K) = \{ m \in \mathcal{G}(I) : x_i \nmid m \}.
		\]
		Then, $I=J+K$ is a Betti splitting if $J$ has a linear resolution.
	\end{theorem}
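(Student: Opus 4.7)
The plan is to invoke the Mayer-Vietoris short exact sequence of graded $R$-modules
\[
0 \longrightarrow J \cap K \xrightarrow{x \mapsto (x,-x)} J \oplus K \xrightarrow{(y,z) \mapsto y+z} J+K = I \longrightarrow 0.
\]
Applying $\mathrm{Tor}^R(-,\K)$ in each internal degree yields the long exact sequence
\[
\cdots \to \mathrm{Tor}_r(J \cap K, \K)_d \xrightarrow{\phi_r} \mathrm{Tor}_r(J, \K)_d \oplus \mathrm{Tor}_r(K, \K)_d \to \mathrm{Tor}_r(I, \K)_d \to \mathrm{Tor}_{r-1}(J \cap K, \K)_d \to \cdots.
\]
A direct dimension count along this sequence shows that the splitting equality in \eqref{eq:betti_splitting} holds for all $r$ and $d$ if and only if $\phi_r = 0$ in every degree. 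Since $\phi_r$ is induced by the pair of inclusions $(\iota_J, -\iota_K)$, the task reduces to verifying that both $(\iota_J)_* : \mathrm{Tor}_r(J \cap K, \K) \to \mathrm{Tor}_r(J, \K)$ and $(\iota_K)_* : \mathrm{Tor}_r(J \cap K, \K) \to \mathrm{Tor}_r(K, \K)$ vanish.

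For the map into $\mathrm{Tor}_r(K, \K)$, I would pass to the finer $\mathbb{Z}^n$-grading. Since every $u \in \G(J)$ is divisible by $x_i$, every $\lcm(u,v)$ with $u \in \G(J)$ and $v \in \G(K)$ is divisible by $x_i$; hence every minimal generator of $J \cap K$ is divisible by $x_i$. On the other hand, no minimal generator of $K$ involves $x_i$. By the standard support bound (obtained from the Taylor complex, or from Hochster's formula for squarefree monomial ideals), $\mathrm{Tor}_r(J \cap K, \K)_{\mathbf{d}}$ is concentrated in multidegrees with $d_i \geq 1$, while $\mathrm{Tor}_r(K, \K)_{\mathbf{d}}$ is concentrated in multidegrees with $d_i = 0$. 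These supports are disjoint, so the multigraded map is zero, and summing over $|\mathbf{d}|=d$ gives vanishing in every single degree.

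For the map into $\mathrm{Tor}_r(J, \K)$, I would exploit the linear resolution hypothesis. Let $d_J$ denote the common degree of the generators of $J$. Linearity forces $\beta_{r,d}(J) \neq 0$ only when $d = d_J + r$. Conversely, because distinct elements $u \in \G(J)$ and $v \in \G(K)$ of $\G(I)$ cannot divide one another, we have $\deg \lcm(u,v) > d_J$, so every minimal generator of $J \cap K$ has degree at least $d_J + 1$. The standard inequality on internal degrees of syzygies then gives $\beta_{r,d}(J \cap K) = 0$ for $d \leq d_J + r$. Thus in every bidegree $(r,d)$ at most one of $\mathrm{Tor}_r(J, \K)_d$ and $\mathrm{Tor}_r(J \cap K, \K)_d$ is nonzero, forcing the induced map between them to be the zero map.

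With both vanishings in hand, $\phi_r = 0$ for all $r$ and $d$, so the long exact sequence breaks into short exact sequences
\[
0 \to \mathrm{Tor}_r(J, \K)_d \oplus \mathrm{Tor}_r(K, \K)_d \to \mathrm{Tor}_r(I, \K)_d \to \mathrm{Tor}_{r-1}(J \cap K, \K)_d \to 0,
\]
and the Betti splitting formula follows at once. I expect the main obstacle to be the first vanishing, as it relies not on the linear resolution hypothesis but on a multigraded support argument; one must make sure the passage from the multigraded statement to the singly graded one in \eqref{eq:betti_splitting} is properly justified via $\beta_{r,d}(M) = \sum_{|\mathbf{d}|=d} \beta_{r,\mathbf{d}}(M)$.
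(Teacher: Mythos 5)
The paper gives no proof of this statement; it is quoted verbatim from Francisco--H\`a--Van Tuyl \cite[Corollary 2.7]{FHVT}. Your argument is correct and in fact reconstructs essentially the original proof from that source: the Tor long exact sequence of $0 \to J\cap K \to J\oplus K \to I \to 0$ together with the dimension count reducing the Betti splitting identity to the vanishing of the two component maps (their Proposition 2.1), the multigraded support argument for the map into $\mathrm{Tor}_r(K,\K)$ (every multidegree of $\mathrm{Tor}_r(J\cap K,\K)$ has positive $x_i$-coordinate because $J\cap K\subseteq J\subseteq (x_i)$, while every multidegree of $\mathrm{Tor}_r(K,\K)$ has $x_i$-coordinate zero), and the degree comparison for the map into $\mathrm{Tor}_r(J,\K)$ (generators of $J\cap K$ live in degree at least $d_J+1$, so the supports of the two Tor modules in bidegree $(r,d)$ are disjoint when $J$ has a $d_J$-linear resolution). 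The only point worth making explicit is that the hypothesis ``$J$ has a linear resolution'' tacitly includes that $J$ is generated in a single degree $d_J$, which is what makes your degree-disjointness argument for the $J$-component well posed; with that understood, all steps are standard and the proof is complete.
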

	
	A closely related ideal to the closed neighborhood ideal of path graphs is the $3$-path ideal of path graphs. Recall that if $P_n$ is the path graph on $n$ vertices $\{x_1,\ldots,x_n\}$ with edge set $E(P_n)=\{\{x_i,x_{i+1}\}: i\in [n-1]\}$, then the set of minimal generators of the $3$-path ideal $J_3(P_n)$ is $\{x_i x_{i+1} x_{i+2} : i \in [n-2]\}$. A formula for the Betti numbers of $J_3(P_n)$ can be deduced from \cite[Theorem~4.14]{AlFa2018}. For our purposes, however, we employ the following simplified formula.
	
	\begin{proposition}\cite[Corollary 3.8]{BCV24}
		\label{path ideal}
		For each $n\ge 3$, $l\ge 0$ and $r\ge 0$, the Betti numbers of $J_3(P_n)$ are
		\[
		\beta_{r,d}(J_3(P_n))=
		\begin{cases}
			\binom{n-3l-3}{r-l}\binom{n-2l-r-2}{2l-r+1}, & \text{ if }d=r+2l+3,\\
			0, &\text{otherwise.}
		\end{cases}
		\]
	\end{proposition}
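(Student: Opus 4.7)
The plan is to establish the formula by induction on $n$ via a Betti splitting of $J_3(P_n)$ with respect to the variable $x_n$. Set $J = \langle x_{n-2}x_{n-1}x_n\rangle$ and $K = \langle x_i x_{i+1}x_{i+2} : 1 \le i \le n-3\rangle$, so that $J_3(P_n) = J + K$ and, up to a shift of indices, $K$ is identified with $J_3(P_{n-1})$. Since $J$ is principal, it has a linear resolution, and hence by \Cref{theorem:betti_splitting} the decomposition $J_3(P_n) = J + K$ is a Betti splitting, giving
\[
\beta_{r,d}(J_3(P_n)) = \beta_{r,d}(J) + \beta_{r,d}(J_3(P_{n-1})) + \beta_{r-1,d}(J \cap K).
\]
The first summand equals $\delta_{r,0}\delta_{d,3}$, and the second is handled by the inductive hypothesis.

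The heart of the argument is to compute $J \cap K$. A direct calculation of pairwise lcms, followed by discarding non-minimal generators, shows that
\[
\mathcal{G}(J \cap K) = \{x_{n-3}x_{n-2}x_{n-1}x_n\} \cup \{x_ix_{i+1}x_{i+2} \cdot x_{n-2}x_{n-1}x_n : 1 \le i \le n-6\}.
\]
Factoring out the common monomial $x_{n-2}x_{n-1}x_n$, we get $J \cap K = x_{n-2}x_{n-1}x_n \cdot \widetilde{L}$, where $\widetilde{L} = \langle x_{n-3}\rangle + J_3(P_{n-4})$ and $J_3(P_{n-4})$ is interpreted in the disjoint variables $x_1,\dots,x_{n-4}$. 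Because the two summands of $\widetilde{L}$ are supported on disjoint variable sets, the K\"unneth formula for $\mathrm{Tor}$ yields
\[
\beta_{i,j}(R/\widetilde{L}) = \beta_{i,j}(R/J_3(P_{n-4})) + \beta_{i-1,j-1}(R/J_3(P_{n-4})),
\]
and the degree-$3$ shift from the common factor then expresses $\beta_{r-1,d}(J\cap K)$ entirely in terms of Betti numbers of $J_3(P_{n-4})$.

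Substituting these expressions into the Betti splitting relation produces a concrete recursion for $\beta_{r,d}(J_3(P_n))$ in terms of the Betti numbers of $J_3(P_{n-1})$ and $J_3(P_{n-4})$. Applying the inductive hypothesis and the parameter translation $l' = l-1$ (both in the $\beta_{r-1,d-3}(J_3(P_{n-4}))$ and in the $\beta_{r-2,d-4}(J_3(P_{n-4}))$ terms), and writing $a = n-3l-3$, $b = r-l$, $c = n-2l-r-2$, $e = 2l-r+1$, the inductive step reduces to the binomial identity
\[
\binom{a}{b}\binom{c}{e} = \binom{a-1}{b}\binom{c-1}{e} + \binom{a-1}{b}\binom{c-1}{e-1} + \binom{a-1}{b-1}\binom{c}{e},
\]
which follows at once from two applications of Pascal's rule: first collapse the two terms carrying $\binom{a-1}{b}$ using $\binom{c-1}{e}+\binom{c-1}{e-1} = \binom{c}{e}$, then combine the remaining two via $\binom{a-1}{b}+\binom{a-1}{b-1} = \binom{a}{b}$. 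Under the standard convention $\binom{n}{k} = 0$ for $k<0$, this identity also absorbs the $\delta_{r,0}\delta_{d,3}$ correction and the boundary contribution at $r=1$ coming from the $\mathrm{Tor}_0(R/\langle x_{n-3}\rangle, K)$ factor in the K\"unneth decomposition. The main technical obstacle is therefore not the identity itself, but the careful bookkeeping of the lcm calculation establishing $\mathcal{G}(J \cap K)$ above, together with verification of the base cases (say $n \in \{3,4,5\}$) which must be checked directly and serve to anchor the induction.
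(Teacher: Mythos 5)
This statement is quoted from \cite[Corollary 3.8]{BCV24} and the paper supplies no proof of it, so there is no internal argument to compare against; what you have written is a self-contained derivation, and it is essentially correct. Your route -- splitting off the last generator $x_{n-2}x_{n-1}x_n$ as the variable-$x_n$ part, invoking \Cref{theorem:betti_splitting}, computing $\G(J\cap K)$ by hand, factoring out $x_{n-2}x_{n-1}x_n$ to reduce $J\cap K$ to $\langle x_{n-3}\rangle + J_3(P_{n-4})$ in disjoint variables, and closing the induction with two applications of Pascal's rule -- is exactly the technique the authors themselves deploy for the closely related ideal $I_n$ in \Cref{proposition:I_n=} (there the splitting is $I_n=\langle x_1x_2\rangle+J_3(P_{n-1})$ and the same identity $\binom{a}{b-1}+\binom{a}{b}=\binom{a+1}{b}$ finishes the induction), so your proof is very much in the spirit of the paper even though the paper chose to import the result. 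Your lcm computation is right: the generators at $i=n-5$ and $i=n-4$ are indeed absorbed by $x_{n-3}x_{n-2}x_{n-1}x_n$, leaving exactly the set you state, and the K\"unneth decomposition and the parameter shift $l'=l-1$ both check out, as does the three-term binomial identity. Two small points of bookkeeping deserve more care than your closing sentence gives them. First, the ``absorption'' of the $\delta_{r,0}\delta_{d,3}$ term and of the $r=1$ boundary contribution is not automatic from the convention $\binom{n}{-k}=0$: it requires tracking that $\beta_{r-1,d-3}(J_3(P_{n-4}))$ and $\beta_{r-2,d-4}(J_3(P_{n-4}))$, when written in quotient-ring Betti numbers, pick up the $\beta_{0,0}(R/J_3(P_{n-4}))=1$ contributions at $r=0$ and $r=1$ respectively, and these are precisely what match the surviving binomial terms $\binom{a-1}{b}\binom{c-1}{e-1}$ at $(r,d)=(0,3)$ and $\binom{a-1}{b-1}\binom{c}{e}$ at $(r,d)=(1,4)$; this should be spelled out rather than asserted. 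Second, your list of base cases is one short: the inductive step as written appeals to $J_3(P_{n-4})$, which only carries the inductive hypothesis for $n-4\ge 3$, so $n=6$ must either be verified directly (there $J\cap K=\langle x_3x_4x_5x_6\rangle$ is principal and the recursion is easy) or handled by an explicit convention for $J_3(P_2)=0$ -- note that blindly substituting $n=2$ into the displayed formula does not return $0$. Neither issue threatens the argument; both are repairable in a line or two.
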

	
	To compute the Betti numbers of $NI(P_n)$, we first compute the Betti numbers of the ideal $I_n$, defined as follows.
	
	\begin{definition}
		For a positive integer $n$, define the monomial ideal $I_n$ in the polynomial ring $R=\mathbb{K}[x_1,x_2,\dots,x_{n+1}]$ as
		\[
		\mathcal{G}(I_n) = \mathcal{G}(NI(P_{n+1})) \setminus \{ x_{n} x_{n+1} \}.
		\]
	\end{definition}
	
	We show that the ideals $I_n$ admit a Betti splitting, and using this fact, we obtain an explicit formula for their Betti numbers. Throughout, we adopt the convention that $\binom{0}{0}=1$ and $\binom{n}{-k}=0$ for $n,k>0$.

	\begin{proposition}
		\label{proposition:I_n=}
		For each $n\geq 1$, the Betti numbers can be written as follows:
		\begin{align}
			\label{eq:betti_I_n}
			\displaystyle
			\beta_{r,d}(I_n)=
			\begin{cases}
				\binom{n-2l-r-3}{2l-r+1}\binom{n-3l-3}{r-l}, & \text{ if }d=r+2l+3,\\
				\binom{n-2l-r-4}{2l-r+2}\binom{n-3l-5}{r-l-1}, & \text{ if }d=r+2l+4,\\
				0, & \text{otherwise.}
			\end{cases}
		\end{align}
		Here $l\ge -1$ is an integer.
	\end{proposition}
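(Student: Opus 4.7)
My plan is to prove the formula by induction on $n$, using a Betti splitting of $I_n$ together with the Betti number formula for the $3$-path ideal $J_3(P_m)$ supplied by \Cref{path ideal}. Concretely, the minimal generating set of $I_n$ is $\{x_1x_2,\,x_2x_3x_4,\,x_3x_4x_5,\,\ldots,\,x_{n-2}x_{n-1}x_n\}$, and I would split it using the variable $x_1$.

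Set $J=(x_1x_2)$ and $K=(x_2x_3x_4,\,x_3x_4x_5,\,\ldots,\,x_{n-2}x_{n-1}x_n)$. Since $J$ is principal it has a linear resolution, so by \Cref{theorem:betti_splitting} the decomposition $I_n=J+K$ is a Betti splitting. Under the shift $y_i=x_{i+1}$, the ideal $K$ coincides with $J_3(P_{n-1})$, so $\beta_{r,d}(K)$ is supplied by \Cref{path ideal}. Because $x_1$ is coprime to every generator of $K$, a direct computation gives
\[
(K:x_1x_2)\ =\ (K:x_2)\ =\ (x_3x_4,\,x_4x_5x_6,\,x_5x_6x_7,\,\ldots,\,x_{n-2}x_{n-1}x_n),
\]
after discarding the redundant $x_3x_4x_5$. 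The shift $y_j=x_{j+2}$ identifies this colon ideal with $I_{n-2}$, hence $J\cap K$ is $x_1x_2\cdot I_{n-2}$ up to relabeling, and $\beta_{r,d}(J\cap K)=\beta_{r,d-2}(I_{n-2})$. Substituting these into the Betti splitting identity yields the recurrence
\[
\beta_{r,d}(I_n)\ =\ \beta_{r,d}(J)+\beta_{r,d}(J_3(P_{n-1}))+\beta_{r-1,d-2}(I_{n-2}).
\]

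The base cases $n=3$ and $n=4$ follow by direct computation on the principal ideal $(x_1x_2)$ and on the two-generator ideal $(x_1x_2,\,x_2x_3x_4)$. For the inductive step I treat the two degree patterns separately. When $d=r+2l+3$ with $l\ge 0$, the $J$-contribution vanishes and the $K$-contribution equals $\binom{n-3l-4}{r-l}\binom{n-2l-r-3}{2l-r+1}$; since $d-2$ cannot be written as $(r-1)+2\tilde l+3$ in integers, only the second case of the inductive hypothesis for $I_{n-2}$ applies, at $\tilde l=l-1$, contributing $\binom{n-2l-r-3}{2l-r+1}\binom{n-3l-4}{r-l-1}$. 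Summing and applying Pascal's rule $\binom{n-3l-4}{r-l}+\binom{n-3l-4}{r-l-1}=\binom{n-3l-3}{r-l}$ gives the required $\binom{n-2l-r-3}{2l-r+1}\binom{n-3l-3}{r-l}$. When $d=r+2l+4$ with $l\ge -1$, no integer $\tilde l'$ satisfies $d=r+2\tilde l'+3$, so $K$ contributes nothing; the boundary pair $(r,l)=(0,-1)$ (which corresponds to $d=2$) is supplied by $\beta_{0,2}(J)=1$; and for $l\ge 0$ the first case of the inductive hypothesis for $I_{n-2}$ at $\tilde l=l$ contributes exactly $\binom{n-2l-r-4}{2l-r+2}\binom{n-3l-5}{r-l-1}$.

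The principal obstacle is bookkeeping: in this scheme the first case of the hypothesis for $I_{n-2}$ feeds the second case of the target formula for $I_n$, while the second case of the hypothesis feeds the first case of the target formula via Pascal's rule. One must verify carefully at the boundary values $l=-1$ and $r\in\{0,1\}$ that the conventions $\binom{m}{-k}=0$ for $k>0$ and $\binom{m}{k}=0$ for $m<k$ kill every potentially spurious contribution, so that the recurrence reproduces precisely the two nonzero degree patterns asserted by the proposition without leakage into a third.
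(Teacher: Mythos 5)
Your proposal is correct and follows essentially the same route as the paper's proof: the same splitting $I_n=(x_1x_2)+J_3(P_{n-1})$ justified by the linear resolution of the principal part, the same identification $J\cap K = x_1x_2\cdot I_{n-2}$, the same induction on $n$, and the same application of Pascal's rule to merge the $K$-contribution with the shifted second-case term of the hypothesis. The only difference is organizational (the paper disposes of the $l=-1$ and $r=0$ boundary cases before the induction, while you absorb them into the two degree patterns), which does not change the argument.
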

	
	\begin{proof}
		The proof is by induction on $n$. Observe that $\beta_{r,d}(I_1) = 0$ for all possible values of $r,d \geq 0$. Moreover, since $I_2=\langle x_1x_2 \rangle$, it is easy to see that 
		\begin{align}
        \label{eq:B_rd(x1x2)}
    		\beta_{r,d}(I_2) =
    		\begin{cases}
    			1, \quad &\text{if } (r,d) = (0,2),\\
    			0, \quad &\text{otherwise}.
    		\end{cases}
		\end{align}
		Thus, \cref{eq:betti_I_n} is valid for $n=1,2$. Now, assume that \cref{eq:betti_I_n} is valid for all values from $2$ to $n-1$, where $n\ge 3$ is an integer. We now proceed to verify it for $n$. In this case, if $(r,d)=(0,2)$, then it is easy to observe that $\beta_{0,2}=1$, and thus, \cref{eq:betti_I_n} is valid. Therefore, from now on wards, we may assume $(r,d)\neq (0,2)$. Let us now write the ideal $I_n$ as 
		\[
		I_n=J_n+K_n,
		\]
		where
		\[
		J_n = 
		\begin{cases}
			\langle 0 \rangle, \quad & \text{if } n = 1,\\
			\langle  x_1 x_2  \rangle, \quad & \text{if } n >1;
		\end{cases}
		\text{ and }
		K_n = 
		\begin{cases}
			\langle 0 \rangle, \quad & \text{if } n \leq 3,\\
			\langle x_2 x_3 x_4,\dots , x_{n-2} x_{n-1} x_n  \rangle, \quad & \text{if } n \geq 4.
		\end{cases}
		\]
		Observe that, for $n \geq 4$, $K_n$ is the $3$-path ideal of the path graph on the vertex set $\{x_2,\ldots,x_n\}$. Thus $K_n = J_3(P_{n-1})$ for $n \geq 4$. Since the ideal $J_n$ has a linear resolution, by Theorem \ref{theorem:betti_splitting} we see that $I_n = J_n + K_n$ is a Betti splitting, and hence, by Definition \ref{definition:betti_splitting},
		\[
		\beta_{r,d}(I_n)=\beta_{r,d}( J_n )+\beta_{r,d}(K_n)+\beta_{r-1,d}( J_n \cap K_n ).
		\]
		Note that,
		\begin{align*}
			J_n \cap K_n =
			\begin{cases}
				\langle 0 \rangle, \quad & \text{if } n < 3, \\
				\langle  x_1 x_2  \rangle . I_{n-2}, & \text{if } n \geq 3.
			\end{cases}
		\end{align*}
		Thus,
		\[
		\beta_{r-1,d} ( J_n \cap K_n ) =
		\begin{cases}
			0, \quad & \text{if } n < 3, \\
			\beta_{r-1,d-2}(I_{n-2}), & \text{if } n \geq 3.
		\end{cases}
		\]
		 Now, if $d=r+2l+3$, then $d-2=(r-1)+2(l-1)+4$. Therefore, using the induction hypothesis and \Cref{path ideal}, we obtain
		 \begin{align*}
		 	\beta_{r,d}(I_n)&=\binom{n-3l-4}{r-l}\binom{n-2l-r-3}{2l-r+1}+\binom{n-3l-4}{r-l-1}\binom{n-2l-r-3}{2l-r+1}\\
		 	&=\binom{n-2l-r-3}{2l-r+1}\binom{n-3l-3}{r-l},
		 \end{align*}
		 where the second equality follows from the binomial identity $\binom{a}{b-1}+\binom{a}{b}=\binom{a+1}{b}$. Finally, if we take $d=r+2l+4$, then $d-2=(r-1)+2l+3$. Hence, by \Cref{path ideal}, $\beta_{r,d}(J_3(P_{n-1}))=0$. Thus, by the induction hypothesis, we have 
		 \[
		 \beta_{r,d}(I_n)=\binom{n-2l-r-4}{2l-r+2}\binom{n-3l-5}{r-l-1},
		 \]
		 as desired.
	\end{proof}
	
	
	In the next theorem, we express the Betti numbers of $NI(P_n)$ in terms of the Betti numbers of $I_n$ obtained in \Cref{proposition:I_n=} above.

	\begin{theorem}
		\label{theorem:path_cases}
		Let $P_n$ be the path graph with $n$ vertices. Then, for 
		\(
		k= \left\lfloor \frac{n}{2} \right\rfloor
		\)
		and $n \geq 2$,
		\begin{equation}
			\begin{aligned}
				\label{eq:brd_path}
				\beta_{r,d}(NI(P_{n})) = &\sum \limits_{i=0}^{k-1} \left[ \beta_{r-i,d-2i}( \langle x_{n-2i-1} x_{n-2i} \rangle ) +  \beta_{r-i,d-2i} (I_{n-2i-1})\right] \\
				&+ \left( \frac{1 + (-1)^{n-1}}{2} \right) \beta_{r-k,d-2k}( \langle x_1 \rangle )
			\end{aligned}
		\end{equation}
	\end{theorem}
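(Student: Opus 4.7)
The plan is to proceed by strong induction on $n$, invoking \Cref{theorem:betti_splitting} at each step. The starting observation is that
\[
\G(NI(P_n)) = \G(I_{n-1}) \sqcup \{x_{n-1}x_n\},
\]
so $NI(P_n) = I_{n-1} + \langle x_{n-1}x_n\rangle$. Since $\langle x_{n-1}x_n\rangle$ is principal, it has a linear resolution, and \Cref{theorem:betti_splitting} yields
\[
\beta_{r,d}(NI(P_n)) = \beta_{r,d}(\langle x_{n-1}x_n\rangle) + \beta_{r,d}(I_{n-1}) + \beta_{r-1,d}\bigl(\langle x_{n-1}x_n\rangle \cap I_{n-1}\bigr).
\]

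The central technical step is to identify the intersection: I claim that for $n \geq 4$,
\[
\langle x_{n-1}x_n\rangle \cap I_{n-1} = (x_{n-1}x_n) \cdot NI(P_{n-2}).
\]
The intersection is generated by the monomials $\lcm(x_{n-1}x_n, m)$ for $m \in \G(I_{n-1})$. Among these generators, only $m = x_{n-3}x_{n-2}x_{n-1}$ shares a variable with $x_{n-1}x_n$, producing $(x_{n-3}x_{n-2})(x_{n-1}x_n)$; this strictly divides the lcm $(x_{n-4}x_{n-3}x_{n-2})(x_{n-1}x_n)$ coming from the adjacent generator, which is therefore redundant. After this single reduction, the surviving minimal generators are $(x_{n-1}x_n)\cdot m'$ for $m' \in \{x_1x_2,\, x_2x_3x_4,\, \ldots,\, x_{n-5}x_{n-4}x_{n-3},\, x_{n-3}x_{n-2}\}$, which is precisely $\G(NI(P_{n-2}))$. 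Consequently, $\beta_{r-1,d}(\langle x_{n-1}x_n\rangle \cap I_{n-1}) = \beta_{r-1,d-2}(NI(P_{n-2}))$, giving the single-step recursion
\[
\beta_{r,d}(NI(P_n)) = \beta_{r,d}(\langle x_{n-1}x_n\rangle) + \beta_{r,d}(I_{n-1}) + \beta_{r-1,d-2}(NI(P_{n-2})).
\]

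Applying the inductive hypothesis to $\beta_{r-1,d-2}(NI(P_{n-2}))$ (where $\lfloor(n-2)/2\rfloor = k-1$) and re-indexing the resulting summation via $i \mapsto i+1$ assembles the full formula. The recursion terminates after $k = \lfloor n/2 \rfloor$ iterations at either $NI(P_0) = 0$ (when $n$ is even, contributing nothing) or $NI(P_1) = \langle x_1\rangle$ (when $n$ is odd, contributing $\beta_{r-k,d-2k}(\langle x_1\rangle)$); the parity factor $\tfrac{1+(-1)^{n+1}}{2}$ equals $1$ in exactly the odd case. The base case $n = 3$ follows immediately from a single application of the Betti splitting above, using $I_2 = \langle x_1 x_2 \rangle$ and $\langle x_2x_3\rangle \cap \langle x_1 x_2 \rangle = (x_2 x_3)\cdot \langle x_1 \rangle$.

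The main obstacle is verifying the intersection identity: one must rule out every potential lcm cancellation aside from the single overlap at $x_{n-3}x_{n-2}x_{n-1}$ and then match the surviving minimal generators bijectively with $\G(NI(P_{n-2}))$. Once this identification is in hand, the induction and the parity accounting are routine.
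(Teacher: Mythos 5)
Your proposal is correct and follows essentially the same route as the paper: the same Betti splitting $NI(P_n)=\langle x_{n-1}x_n\rangle+I_{n-1}$, the same identification $\langle x_{n-1}x_n\rangle\cap I_{n-1}=(x_{n-1}x_n)\cdot NI(P_{n-2})$ yielding the recursion $\beta_{r,d}(NI(P_n))=\beta_{r,d}(\langle x_{n-1}x_n\rangle)+\beta_{r,d}(I_{n-1})+\beta_{r-1,d-2}(NI(P_{n-2}))$, and the same induction on $n$ with base case $n=3$. You actually justify the intersection identity in more detail than the paper, which merely asserts it.
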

	
	\begin{proof}
		The proof is again by induction on $n$. If $n=2$, then $NI(P_2)=\l x_1x_2\r$, and thus \cref{eq:brd_path} is easily verified. For $n=3$, the minimal free resolution of $NI(P_3)$ is given by,
		\[
		0\longrightarrow R(-3)\xrightarrow{\begin{bmatrix}
				x_3\\
				-x_1
		\end{bmatrix}} R(-2)\oplus R(-2)\xrightarrow{\begin{bmatrix}
				x_1 x_2 & x_2 x_3
		\end{bmatrix}} NI(P_3)\longrightarrow 0.
		\]
		Therefore,
			\[
		\beta_{r,d} ( NI(P_3) ) =
		\begin{cases}
			2, \quad & \text{if } (r,d)=(0,2), \\
			1, \quad & \text{if } (r,d)=(1,3), \\
			0, \quad & \text{otherwise. }.
		\end{cases}
		\]
		 Thus, \cref{eq:brd_path} also holds for $n=3$. Now, assume that \cref{eq:brd_path} is valid for all values from $2$ to $n-1$, where $n\ge 4$ is an integer. We now proceed to verify it for $n$. Observe that the ideal $\langle  x_{n-1} x_n  \rangle$ has a linear resolution, and hence, by Theorem, \ref{theorem:betti_splitting}
		 \[
		 NI(P_n) = \langle  x_{n-1} x_n  \rangle + I_{n-1}
		 \]
		 is a Betti splitting. Note that, $ \langle  x_{n-1} x_n  \rangle \cap I_{n-1} = \langle ( x_{n-1} x_n ) \cdot NI(P_{n-2}) \rangle $, where $P_{n-2}$ is the induced path on the vertex set $\{x_1,\ldots,x_{n-2}\}$. Thus, by Definition \ref{definition:betti_splitting},
		 \begin{align}
		 	\label{eq:betti_splitting_path}
		 	\beta_{r,d}(NI(P_n)) = \beta_{r,d}( \langle  x_{n-1} x_n  \rangle ) + \beta_{r,d}(I_{n-1}) + \beta_{r-1,d-2}(NI(P_{n-2})).
		 \end{align}
		 By induction hypothesis, we have
		 \begin{equation*}
		 	\begin{aligned}
		 		\beta_{r-1,d-2}(NI(P_{n-2})) = &\sum \limits_{i=1}^{k-1} \left[ \beta_{r-i,d-2i}( \langle x_{n-2i-1} x_{n-2i} \rangle ) +  \beta_{r-i,d-2i} (I_{n-2i-1})\right] \\
		 		&+ \left( \frac{1 + (-1)^{n-1}}{2} \right) \beta_{r-k,d-2k}( \langle x_1 \rangle ).
		 	\end{aligned}
		 \end{equation*}
		 	Combining the above equation with \cref{eq:betti_splitting_path}, we get the desired result.
	\end{proof}
	
	
	Using \Cref{theorem:path_cases}, we now proceed to provide an explicit formula for the Betti numbers of $NI(P_n)$.

\begin{corollary}
	\label{corollary:path_betti}
	The values of $ \beta_{r,d}(NI(P_{n})) $ can be obtained using the following formula:
	\[
        {
        \footnotesize
		\begin{cases}
			\begin{aligned}
                &\binom{|k - r - 1|}{k - r - 1} \cdot \delta_{d,2r+2}  
				+ \binom{n - 2l - r - 4}{2l - r + 1} 
				\Bigg[
				\binom{n - 3l - 3}{r - l}
                -\binom{n - 3l - 4 - \left\lfloor \frac{q}{2} \right\rfloor }{r - l - 1 - \left\lfloor \frac{q}{2} \right\rfloor}\\
				&+ \binom{n - 3l - 4}{r - l -1} 
                - \binom{n - 3l - 3 - \left\lceil \frac{q}{2} \right\rceil }{r - l - \left\lfloor \frac{q}{2} \right\rfloor} \Bigg] 
                + \left( \frac{1 + (-1)^{n-1}}{2} \right) \delta_{r,k} \cdot \delta_{d,2r+1},
				\qquad\qquad \text{if } d = r + 2l + 3,
			\end{aligned}
			\\[10ex]
			\begin{aligned}
				&\binom{|k - r - 1|}{k - r - 1} \cdot \delta_{d,2r+2} 
				+ \binom{n - 2l - r - 5}{2l - r + 2} \Bigg[
				2 \binom{n - 3l - 5}{r - l - 1} 
                - \binom{n - 3l - 5 - \left\lfloor \frac{q}{2} \right\rfloor }{r - l - 1 - \left\lfloor \frac{q}{2} \right\rfloor}\\
				& - \binom{n - 3l - 5 - \left\lceil \frac{q}{2} \right\rceil }{r - l - 1 -  \left\lfloor \frac{q}{2} \right\rfloor}
				\Bigg]
                + \left( \frac{1 + (-1)^{n-1}}{2} \right) \delta_{r,k} \cdot \delta_{d,2r+1} ,
				\qquad\qquad\qquad\qquad\qquad \text{if } d = r + 2l + 4;
			\end{aligned}
		\end{cases}
        }
	\]
	where, $k = \lfloor\frac{n}{2}\rfloor,\ q = \min (k,r+1)$ and $\delta_{i,j}$ is the Kronecker delta function.
\end{corollary}

\begin{proof}
	We will explicitly find the values of each component in \cref{eq:brd_path}. By \cref{eq:B_rd(x1x2)} we can see that,
	\begin{align*}
		\sum \limits_{i=0}^{k-1} \beta_{r-i,d-2i}( \langle x_{n-2i-1} x_{n-2i} \rangle ) &= 
		\begin{cases}
			1, \quad \text{if } d = 2r + 2 \text{ and } r \leq k-1, \\
			0, \quad \text{otherwise}.
		\end{cases} \\
        &=  \binom{\mid k - r - 1 \mid}{k - r - 1} \cdot \delta_{d,2r+2}
	\end{align*}
	and $\beta_{r-k,d-2k}( \langle x_1 \rangle ) = \delta_{r,k} \cdot \delta_{d,2r+1}$. Notice that the terms in $\sum_{i=0}^{k-1} \beta_{r-i,d-2i}(I_{n-2i-1})$ shifts alternatively from the cases in \cref{eq:betti_I_n} and is $0$ when $i > r$. Thus, if $d=r+2l+3$, then we get
	\begin{align*}
        &\sum_{i=0}^{k-1} \beta_{r-i,d-2i}(I_{n-2i-1}) 
        \\
		=&\binom{n - 2l - r - 4}{2l - r + 1}
        \Bigg[
        \sum_{i=0}^{\lfloor\frac{q}{2}\rfloor - 1} \binom{n - 3l - 5 - i}{ r - l - 1 - i}
        + \sum_{i=0}^{\lceil\frac{q}{2}\rceil - 1} \binom{n - 3l - 4 - i}{ r - l - i}
        \Bigg]
        \\
		=& \binom{n - 2l - r - 4}{2l - r + 1} 
		\Bigg[
		\binom{n - 3l - 4}{r - l - 1} 
		- \binom{n - 3l - 4 - \left\lfloor \frac{q}{2} \right\rfloor}{r - l - 1 - \left\lfloor \frac{q}{2} \right\rfloor} \\
		&\qquad\qquad\qquad\qquad\qquad\quad
        +\binom{n - 3l - 3}{r - l}
        - \binom{n - 3l - 3 - \left\lceil \frac{q}{2} \right\rceil}{r - l - \left\lceil \frac{q}{2} \right\rceil}
		\Bigg],
	\end{align*}
	where the first equality is obtained by using the formula in \cref{eq:betti_I_n}, and the second equality is obtained using the binomial ideantity $\binom{a}{b-1}+\binom{a}{b}=\binom{a+1}{b}$. A similar calculation in the case $d=r+2l+4$, yields
	\begin{align*}
		\sum_{i = 0}^{k - 1} \beta_{r - i,d - 2i}(I_{n - 2i - 1})
		= &\binom{n - 2l - r - 5}{2l - r + 2} 
        \Bigg[
		2  \binom{n - 3l - 5}{r - l - 1} \\
        &- \binom{n - 3l - 5 - \left\lfloor \frac{q}{2} \right\rfloor}{r - l - 1 - \left\lfloor \frac{q}{2} \right\rfloor} 
        - \binom{n - 3l - 5 - \left\lceil \frac{q}{2} \right\rceil}{r - l - 1 - \left\lceil \frac{q}{2} \right\rceil}
        \Bigg].
	\end{align*}
	Summing up these formulas according to \cref{eq:brd_path} we obtain our result.
\end{proof}

\begin{example}
   Consider the path graph $P_{12}$. The Betti table of $NI(P_{12})$ computed using \Cref{corollary:path_betti} is given as follows.
    \begin{table}[h!]
    \centering
    \[
        \begin{array}{r|ccccccc}
              & 0 & 1 & 2 & 3 & 4 & 5 & 6 \\ \hline
            0 & 1 & \cdot & \cdot & \cdot & \cdot & \cdot & \cdot \\
            1 & \cdot & 2 & \cdot & \cdot & \cdot & \cdot & \cdot \\
            2 & \cdot & 8 & 10 & \cdot & \cdot & \cdot & \cdot \\
            3 & \cdot & \cdot & 12 & 14 & \cdot & \cdot & \cdot \\
            4 & \cdot & \cdot & 10 & 28 & 20 & \cdot & \cdot \\ 
            5 & \cdot & \cdot & \cdot & 6 & 16 & 12 & \cdot \\ 
            6 & \cdot & \cdot & \cdot & \cdot & \cdot & \cdot & 1 \\ \hline
            \text{Total} & 1 & 10 & 32 & 48 & 36 & 12 & 1
        \end{array}
    \]
    \caption{Betti table of $R/NI(P_{12})$}
    \end{table}
\end{example}
        
\section{Conclusion}\label{sec:conclusion}
	In the final part of this paper, we present some concluding remarks on the Betti numbers of closed neighborhood ideals of trees and examine the bridge-friendly condition for the closed neighborhood ideals of chordal and bipartite graphs.
		
	\subsection{Betti numbers for a class of trees} As shown in \Cref{thm:Bm of trees}, the closed neighborhood ideals of trees admit minimal BM resolutions and thus, in principle, by \Cref{critical cell betti pd}, one may compute their Betti numbers by counting the corresponding critical cells. In practice, however, this counting becomes tedious unless one can identify a correspondence between the critical cells of the original tree and those of certain subtrees. In \Cref{theorem:tree_betti}, we exploit such correspondences to express the Betti numbers of the closed neighborhood ideal of a class of trees in terms of the Betti numbers of appropriate subtrees.
		
	We begin with some basic results concerning how to verify the bridge, gap, and true gap conditions for certain special classes of monomial ideals. We remark that these results may be useful in future investigations of BM resolutions of arbitrary monomial ideals and their associated critical cells.

		\begin{lemma}
			\label{lemma:bridge_gap_I_I'}
			Let $I$ be a monomial ideal in $R=\mathbb{K}[x_1,x_2,\dots,x_n]$ and $A \subseteq \mathcal{G}(I)$ such that;
			\begin{enumerate}
				
				\item $\Su(\lcm(\mathcal{G}(I))) = \Su(\lcm(A)) \sqcup \Su(\lcm(\mathcal{G}(I')))$,
				
				\item For each $m \in A$, there exists some $x_i \mid m$ such that $x_i \nmid m'$ for all $m' \in \mathcal{G}(I) \setminus \{ m \}$,
				
			\end{enumerate}
			where $\mathcal{G}(I') = \mathcal{G}(I) \setminus A $. Then $\sigma \subseteq \mathcal{G}(I)$ has a bridge (resp. a gap) if and only if $\sigma' = \sigma \cap \mathcal{G}(I')$ has a bridge (resp. a gap). 
		\end{lemma}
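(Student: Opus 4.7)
I plan to show that, under the two hypotheses, no element of $A$ can serve as either a bridge or a gap of $\sigma$ or of $\sigma'$, so every bridge or gap on either side is witnessed by an element of $\mathcal{G}(I')$. I will then use hypothesis (1) to see that the defining lcm equations for such a witness reduce, after cancelling a common factor, to the analogous equations for $\sigma'$.

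The first step is a direct consequence of hypothesis (2). Fix $m \in A$ and let $x_i$ be a variable with $x_i \mid m$ and $x_i \nmid m'$ for every $m' \in \mathcal{G}(I) \setminus \{m\}$. Then, for any $\tau \subseteq \mathcal{G}(I)$, the variable $x_i$ divides $\lcm(\tau)$ if and only if $m \in \tau$. In particular, if $m \in \sigma$ then $\lcm(\sigma \setminus \{m\}) \neq \lcm(\sigma)$, so $m$ is not a bridge of $\sigma$; and if $m \notin \sigma$ then $\lcm(\sigma \cup \{m\}) \neq \lcm(\sigma)$, so $m$ is not a gap of $\sigma$. The identical reasoning applied to $\sigma'$ (using $A \cap \sigma' = \emptyset$) shows that no element of $A$ can be a gap of $\sigma'$. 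Hence all bridges and gaps of both $\sigma$ and $\sigma'$ lie in $\mathcal{G}(I')$.

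The second step uses hypothesis (1): since $\Su(\lcm(A))$ and $\Su(\lcm(\mathcal{G}(I')))$ are disjoint, for every $\tau \subseteq \mathcal{G}(I)$ the lcm factors as
\[
\lcm(\tau) \;=\; \lcm(A \cap \tau) \cdot \lcm(\tau \cap \mathcal{G}(I')),
\]
with the two factors involving disjoint sets of variables. For $m \in \mathcal{G}(I')$, the first factor is unchanged when we pass from $\sigma$ to $\sigma \setminus \{m\}$ or to $\sigma \cup \{m\}$. Cancelling this common coprime factor from $\lcm(\sigma \setminus \{m\}) = \lcm(\sigma)$ (resp.\ $\lcm(\sigma \cup \{m\}) = \lcm(\sigma)$) yields the corresponding identity for $\sigma'$, and conversely. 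Combining with the first step, this establishes the desired equivalence between bridges (respectively, gaps) of $\sigma$ and bridges (respectively, gaps) of $\sigma'$.

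I do not expect a real obstacle here; the argument is essentially bookkeeping once the two structural consequences of the hypotheses are isolated. The only subtlety worth flagging is that a gap of $\sigma'$ is a priori allowed to lie in $A$, so the private-variable argument must be applied symmetrically to both $\sigma$ and $\sigma'$ in order for the equivalence to close up.
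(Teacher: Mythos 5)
Your proof is correct and follows essentially the same route as the paper's: factor $\lcm(\tau)$ over the partition $\mathcal{G}(I)=A\sqcup\mathcal{G}(I')$ using hypothesis (1) and cancel the coprime $A$-factor. Your preliminary step using hypothesis (2) to rule out bridge/gap witnesses lying in $A$ is left implicit in the paper's argument (which tacitly assumes the witness $m'$ belongs to $\mathcal{G}(I')$), so including it is a small but welcome improvement in rigor.
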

		
		\begin{proof}
            Assume that $m_{1}$ is a bridge or a gap of $\sigma$ in $I$. Then, by \cite[Definition 2.3]{chau2024powers}, $\mathrm{lcm}(\sigma \cup \{ m_{1} \}) = \mathrm{lcm}(\sigma \setminus \{ m_{1} \})$. Since condition (2) asserts that no element of $A$ is a bridge or a gap in $I$, it follows that $m_{1} \notin A$. Thus, we can see that,
			\begin{align*}
				\lcm(\sigma \cup \{ m_{1} \}) &= \lcm((\sigma \cup \{ m_{1} \}) \cap \mathcal{G}(I')) \ \cdot \ \lcm((\sigma \cup \{ m_{1} \}) \cap A) \quad && \text{(by condition (1))} \\
				&= \lcm(\sigma' \cup \{ m_{1} \}) \ \cdot \ \lcm(\sigma \cap A) && \text{(since $m_1 \notin A$)}\\
				\lcm(\sigma \setminus \{ m_{1} \}) &= \lcm((\sigma \setminus \{ m_{1} \}) \cap \mathcal{G}(I')) \ \cdot\ \lcm((\sigma \setminus \{ m_{1} \}) \cap A) && \text{(by condition (1))} \\
				&= \lcm(\sigma' \setminus \{ m_{1} \}) \ \cdot \ \lcm(\sigma \cap A). && \text{(since $m_1 \notin A$)}
			\end{align*} 
			Hence, $ \lcm(\sigma' \cup \{ m_{1} \}) = \lcm(\sigma' \setminus \{ m_{1} \}) $. This implies $m_{1}$ is a bridge or a gap of $\sigma'$ in $I'$. Furthermore, since the implications used in this proof follows from the definitions of bridge and gap, one can see that the reverse implication also holds. Hence, the lemma is proved.
		\end{proof}
		
		As an application, we obtain the following two interesting corollaries.
		
		\begin{corollary}
			\label{corollary:truegap_I_I'}
			Let $I$ be a monomial ideal in $R=\mathbb{K}[x_1,x_2,\dots,x_n]$ and $A \subseteq \mathcal{G}(I)$ such that;
			\begin{enumerate}
				
				\item $\Su(\lcm(\mathcal{G}(I))) = \Su(\lcm(A)) \sqcup \Su(\lcm(\mathcal{G}(I')))$,
				
				\item For each $m \in A$, there exists some $x_i \mid m$ such that $x_i \nmid m'$ for all $m' \in \mathcal{G}(I) \setminus \{ m \}$,
				
			\end{enumerate}
			where $\mathcal{G}(I') = \mathcal{G}(I) \setminus A $. Then $m'$ is a true gap of $\sigma \subseteq \mathcal{G}(I)$ if and only if $m'$ is a true gap of $\sigma' = \sigma \cap \mathcal{G}(I')$.
		\end{corollary}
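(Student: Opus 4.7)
The plan is to derive the corollary by applying Lemma \ref{lemma:bridge_gap_I_I'} twice, once to $\sigma$ itself and once to the augmented set $\sigma \sqcup \{m'\}$, after first ensuring that any candidate true gap actually lies in $\mathcal{G}(I')$. I will read Lemma \ref{lemma:bridge_gap_I_I'} at the level of individual monomials: a glance at its proof shows the stronger equivalence ``a given $m_1 \in \mathcal{G}(I')$ is a bridge (resp.\ gap) of $\sigma$ in $I$ iff it is a bridge (resp.\ gap) of $\sigma'$ in $I'$'', and this is what I will actually use.

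First I would observe that any gap of either $\sigma$ or $\sigma'$ must lie in $\mathcal{G}(I')$. Indeed, if $m' \in A$, then by hypothesis (2) there is a variable $x_i$ dividing $m'$ but no other generator of $I$; since $m' \notin \sigma$, $x_i \nmid \lcm(\sigma)$, so $\lcm(\sigma) \neq \lcm(\sigma \sqcup \{m'\})$, contradicting that $m'$ is a gap. Hence $m' \in \mathcal{G}(I')$, and in particular $(\sigma \sqcup \{m'\}) \cap \mathcal{G}(I') = \sigma' \sqcup \{m'\}$, so the hypotheses of Lemma \ref{lemma:bridge_gap_I_I'} transport verbatim from $\sigma$ to $\sigma \sqcup \{m'\}$. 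Applying the lemma to $\sigma$ gives (a) $m'$ is a gap of $\sigma$ iff of $\sigma'$, and (b) each $m_2 \in \mathcal{G}(I')$ is a bridge of $\sigma$ iff of $\sigma'$; applying it to $\sigma \sqcup \{m'\}$ gives (c) each $m_2 \in \mathcal{G}(I')$ is a bridge of $\sigma \sqcup \{m'\}$ iff of $\sigma' \sqcup \{m'\}$.

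With (a), (b), (c) in hand the true-gap equivalence is a symmetric case analysis. Assuming $m'$ is a gap (so $m' \in \mathcal{G}(I')$ by the previous paragraph), I consider any purported new bridge $m_2$ of $\sigma \sqcup \{m'\}$ with $m' >_I m_2$. If $m_2 \in A$, then the same sentinel-variable argument used to exclude $m' \in A$ shows $m_2$ cannot be a bridge of any set at all, since the variable $x_i$ uniquely dividing $m_2$ cannot appear in $\lcm((\sigma \sqcup \{m'\}) \setminus \{m_2\})$; so this case is vacuous on both sides. If $m_2 \in \mathcal{G}(I')$, statements (b) and (c) allow me to pass bridges of $\sigma \sqcup \{m'\}$ to bridges of $\sigma' \sqcup \{m'\}$ and bridges of $\sigma'$ to bridges of $\sigma$, with the order ``$m' > m_2$'' preserved since the order on $\mathcal{G}(I')$ is the restriction of $>_I$. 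Running the argument in both directions gives the desired equivalence.

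The only obstacle is bookkeeping: one must invoke Lemma \ref{lemma:bridge_gap_I_I'} twice (for $\sigma$ and for $\sigma \sqcup \{m'\}$), read it at the monomial level, and treat the $m_2 \in A$ case separately since the lemma says nothing about it. The sentinel-variable argument supplies that missing piece cleanly, after which the proof is just a direct double implication.
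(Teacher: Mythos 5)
Your proof is correct and follows essentially the same route as the paper's: apply \Cref{lemma:bridge_gap_I_I'} at the level of individual monomials to both $\sigma$ and $\sigma\sqcup\{m'\}$, and transfer the gap and new-bridge conditions in both directions. The extra care you take --- verifying via the sentinel variable of hypothesis (2) that any gap or bridge must lie in $\mathcal{G}(I')$, and that $(\sigma\sqcup\{m'\})\cap\mathcal{G}(I')=\sigma'\sqcup\{m'\}$ --- supplies details the paper's argument uses implicitly.
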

		
		\begin{proof}
			Let $m'$ be a true gap of $\sigma \subseteq \mathcal{G}(I)$. By \cref{lemma:bridge_gap_I_I'}, $m'$ is a gap of $\sigma' = \sigma \cap \mathcal{G}(I')$. Assume $m''$ is a bridge of $\sigma' \cup \{ m' \}$  and $m''<m'$. By \cref{lemma:bridge_gap_I_I'} again, $m''$ is a bridge of $\sigma \cup \{ m' \}$. Since $m'$ is a true gap of $\sigma$, $m''$ must be a bridge of $\sigma$. Then, using \cref{lemma:bridge_gap_I_I'}, we conclude that $m''$ is a bridge of $\sigma'$, which implies that $m'$ is a true gap of $\sigma'$. Similarly, one can easily prove the reverse direction. 
		\end{proof}
		
		\begin{corollary}
			\label{corollary:I_I'_critical}
			Let $I$ be a monomial ideal in $R = \mathbb{K}[x_1,\dots,x_n];\ A \subseteq \mathcal{G}(I)$ and $\mathcal{G}(I') = \mathcal{G}(I) \setminus A$ such that:
			\begin{enumerate}
				
				\item $\Su(\lcm(\mathcal{G}(I))) = \Su(\lcm(A)) \sqcup \Su(\lcm(\mathcal{G}(I')))$,
				
				\item For all $m \in A$, there exists $x_i \mid m$ and $x_i \nmid m'$ for all $m' \in \mathcal{G}(I) \setminus \{ m \}$,
				
				\item $I$ and $I'$ are bridge-friendly with respect to some total ordering of their minimal generators, where the ordering of the minimal generators of $I'$ is induced by that of the minimal generators of $I$.

			\end{enumerate}
			Then $\sigma \subseteq \mathcal{G}(I)$ is $I$-critical if and only if $\sigma' = \sigma \cap \mathcal{G}(I')$ is $I'$-critical.
		\end{corollary}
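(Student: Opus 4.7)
The plan is to reduce the claim to the no-bridge and no-true-gap characterization of critical sets provided by \Cref{proposition:critical_bridgefriendly}, and then invoke \Cref{lemma:bridge_gap_I_I'} and \Cref{corollary:truegap_I_I'}. First I would apply \Cref{proposition:critical_bridgefriendly}: since both $I$ and $I'$ are assumed to be bridge-friendly, $\sigma \subseteq \mathcal{G}(I)$ is $I$-critical if and only if $\sigma$ admits neither a bridge nor a true gap in $I$, and similarly $\sigma' = \sigma \cap \mathcal{G}(I')$ is $I'$-critical if and only if it admits neither a bridge nor a true gap in $I'$. Thus the corollary reduces to proving the two equivalences: $\sigma$ has a bridge in $I$ iff $\sigma'$ has a bridge in $I'$, and $\sigma$ has a true gap in $I$ iff $\sigma'$ has a true gap in $I'$.

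The first equivalence is exactly the content of \Cref{lemma:bridge_gap_I_I'}, so I would simply cite it. For the second equivalence, I would start by observing that no element of $A$ can ever be a true gap of $\sigma$. Indeed, if $m' \in A \setminus \sigma$, then by hypothesis (2) there is a variable $x_i$ with $x_i \mid m'$ and $x_i \nmid m''$ for all $m'' \in \mathcal{G}(I) \setminus \{m'\}$. In particular, $x_i \nmid \lcm(\sigma)$ while $x_i \mid \lcm(\sigma \sqcup \{m'\})$, so $m'$ is not a gap of $\sigma$, let alone a true gap. Consequently, every candidate true gap of $\sigma$ already lies in $\mathcal{G}(I') \setminus \sigma'$, and \Cref{corollary:truegap_I_I'} then identifies the true gaps of $\sigma$ in $I$ with the true gaps of $\sigma'$ in $I'$. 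Combining this with the bridge equivalence yields the corollary.

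The main point requiring a small amount of care, rather than a genuine obstacle, is precisely the direction involving true gaps: \Cref{corollary:truegap_I_I'} is an ``iff'' for a fixed monomial $m'$, and one must make sure that the quantifier ``there exists a true gap'' behaves well under restriction from $\mathcal{G}(I)$ to $\mathcal{G}(I')$. This is secured by the observation above that elements of $A$ are disqualified from being gaps, together with condition (1), which ensures that the supports of $A$ and $\mathcal{G}(I')$ are disjoint so that true gaps on either side cannot migrate. Once this is pinned down, the proof is a direct assembly of \Cref{proposition:critical_bridgefriendly}, \Cref{lemma:bridge_gap_I_I'}, and \Cref{corollary:truegap_I_I'}, and no new combinatorial ingredient is needed.
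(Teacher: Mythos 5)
Your proof is correct and follows essentially the same route as the paper: reduce via \Cref{proposition:critical_bridgefriendly} to the no-bridge/no-true-gap characterization and then invoke \Cref{lemma:bridge_gap_I_I'} and \Cref{corollary:truegap_I_I'}. Your additional observation that condition (2) prevents any element of $A$ from being a gap of $\sigma$ is a worthwhile point of care (it justifies passing from the per-monomial equivalence of \Cref{corollary:truegap_I_I'} to the existential statement) that the paper's own proof leaves implicit.
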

		
		\begin{proof}
			By \Cref{proposition:critical_bridgefriendly}, $\sigma$ is $I$-critical if and only if $\sigma$ has no bridge or true gap. Combining \cref{lemma:bridge_gap_I_I'} and \cref{corollary:truegap_I_I'}, we observe that this happens if and only if $\sigma'$ has no bridge or true gap. This completes the proof.
		\end{proof}
		We now present a method to express the Betti numbers of the closed neighborhood ideal of a class of trees in terms of the Betti numbers of appropriate subtrees.
		
		\begin{theorem}
			\label{theorem:tree_betti}
			Let $T$ be a tree with a leaf vertex $v_1$ and $N(v_1) = \{ v \}$ satisfying the following conditions:
			\begin{enumerate}
				
				\item $v$ has $n>0$ many leaf neighbors, and at most one non-leaf neighbor, say $u$,
				
				\item $u$ has at least one leaf neighbor (see \cref{fig:tree_special_case}).
				
			\end{enumerate}
			Then, for $T'=T\setminus \{v,v_1,\dots,v_n\}$ we have,
			\begin{equation*}
				\beta_{r,d} \left( \frac{R}{NI(T)} \right) = \beta_{r,d} \left( \frac{R}{NI(T')} \right) + \sum\limits_{i=1}^{n} \binom{n}{i} \beta_{r-i,d-i-1} \left( \frac{R}{NI(T')} \right).
			\end{equation*}
		\end{theorem}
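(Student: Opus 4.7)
The plan is to apply \Cref{corollary:I_I'_critical} to a decomposition $\mathcal{G}(NI(T)) = A \sqcup \mathcal{G}(NI(T'))$, where $A = \{m(T,v_i) : i \in [n]\}$, and then count the critical subsets by the size of their intersection with $A$.

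First I would show that under the hypotheses the minimal generating set of $NI(T)$ is exactly $A \sqcup \mathcal{G}(NI(T'))$. Since $v_1$ is a leaf of $v$, the generator $m(T,v_1)=v v_1$ divides $m(T,v) = v v_1 \cdots v_n u$, so $m(T,v)$ is not minimal. By hypothesis (2), $u$ has a leaf neighbor $w$ in $T$; then $m(T,w) = uw$ divides $m(T,u) = v\cdot m(T',u)$, so $m(T,u)$ is not minimal either. For any other $y\in V(T')$ one has $N_T[y] = N_{T'}[y]$, hence $m(T,y) = m(T',y)$; the variables $v, v_1,\dots,v_n$ never appear in such an $m(T,y)$, so the divisibility relations among its candidate divisors are the same in $NI(T)$ and in $NI(T')$, and minimality transfers between the two ideals. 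This yields the claimed disjoint decomposition.

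Next I would verify the three conditions of \Cref{corollary:I_I'_critical} for the pair $(I,I') = (NI(T), NI(T'))$ and the set $A$: the support of $\lcm(A)$ equals $\{v,v_1,\dots,v_n\}$, which is disjoint from $V(T') = \Su(\lcm(\mathcal{G}(NI(T'))))$ and has union $V(T)$; each $m(T,v_i)$ contains the variable $v_i$, which divides no other minimal generator of $NI(T)$; and both $NI(T)$ and $NI(T')$ are bridge-friendly by \Cref{thm:Bm of trees}. Then \Cref{corollary:I_I'_critical} gives that $\sigma \subseteq \mathcal{G}(NI(T))$ is critical if and only if $\sigma' := \sigma \cap \mathcal{G}(NI(T'))$ is critical in $NI(T')$.

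To finish, every $\sigma$ decomposes uniquely as $\sigma = \sigma'\sqcup S$ with $\sigma' \subseteq \mathcal{G}(NI(T'))$ and $S \subseteq A$. Since $\Su(\lcm(\sigma'))$ and $\Su(\lcm(S))$ are disjoint, $|\sigma| = |\sigma'|+|S|$ and $\deg\lcm(\sigma) = \deg\lcm(\sigma') + \deg\lcm(S)$. For $|S| = i \geq 1$, $\lcm(S) = v\prod_{m(T,v_j)\in S} v_j$ has degree $i+1$; for $S = \emptyset$ it is $1$. Using \Cref{critical cell betti pd} to count critical subsets by the value of $i = |S|$ then produces the $i=0$ term $\beta_{r,d}(R/NI(T'))$ together with $\binom{n}{i}\beta_{r-i,d-i-1}(R/NI(T'))$ for each $i \geq 1$, which is the claimed formula.

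The substantive step is the first one: the two hypotheses on $v$ and $u$ are precisely what is needed to remove both $m(T,v)$ and $m(T,u)$ from the minimal generating set, giving the support-disjoint decomposition required by \Cref{lemma:bridge_gap_I_I'}. If $u$ had no leaf neighbor, the generator $m(T,u) = v\cdot m(T',u)$ would remain minimal and mix the ``old'' variable $v$ with the variables of $T'$, so the clean critical-cell correspondence with $NI(T')$ would break and a more delicate analysis would be required.
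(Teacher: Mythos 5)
Your proposal is correct and follows essentially the same route as the paper: both arguments reduce to \Cref{corollary:I_I'_critical} applied to the split $\mathcal{G}(NI(T)) = \{vv_1,\dots,vv_n\} \sqcup \mathcal{G}(NI(T'))$ and then count critical subsets according to how many generators divisible by $v$ they contain. Your write-up is in fact somewhat more careful than the paper's, since you explicitly verify that $m(T,v)$ and $m(T,u)$ drop out of the minimal generating set (which is exactly where hypotheses (1) and (2) are used) before invoking the corollary.
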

		
		\begin{proof}
			\begin{figure}[H]
				\begin{tikzpicture}
					[scale=0.75, every node/.style={circle, draw, fill=blue!20, inner sep=0.8pt}]
					
					\node (v1) at (0,0) {$v_1$};
					\node (vn) at (2,0) {$v_n$};
					\node[inner sep=2.5pt] (v) at (1,1.5) {$v$};
					\node[inner sep=0.2pt] (u1) at (3,1.5) {\phantom{u1}};
					\node[inner sep=2.5pt] (u) at (2,3) {$u$};
					\node[inner sep=0.3pt] (x10) at (3,4.5) {$x_1^0$};
					\node[draw=none,fill=none] at (1,0) {$\dots$};

					\draw (v1) -- (v) -- (vn);
					\draw (v) -- (u) -- (u1);
					\draw[loosely dotted, line width=1pt] (u) -- (x10);
					
				\end{tikzpicture}
				\caption{Tree in \cref{theorem:tree_betti}}
				\label{fig:tree_special_case}
			\end{figure}
			Let $NI(T) = I,\ NI(T') = I'$, and 
            \[
            M_{r,d}(I) = \{ \sigma \subseteq \mathcal{G}(I) : \sigma \text{ is a critical subset},\ |\sigma| = r,\ \deg(\lcm(\sigma)) = d \}.
            \]
            We also define $M_{r,d}(I')$ analogously. Let $A=\{vv_1,\ldots,vv_n\}$. 
			By \Cref{thm:Bm of trees} and \Cref{critical cell betti pd}, we know that $\beta_{r,d} \left( R/I \right) = |M_{r,d}(I)|$ and $\beta_{r,d} \left( R/I' \right) = |M_{r,d}(I')|$. Now, observe that if $m \in \G(I)$ and $v \mid m $, then $m = v v_i$ for some $i \in \{ 1, \dots , n\}$. Furthermore, by \cref{corollary:I_I'_critical}, if $\sigma \in M_{r,d}(I)$ and $v \nmid m $ for all $m \in \sigma$, then \( \sigma \in M_{r,d}(I') \) . 
			Next, we focus on characterizing the critical cells $\sigma\in M_{r,d}(I)$ such that $v\mid m$ for some $m\in\sigma$; and we do this based on $|\sigma_v|$, where  $\sigma_v = \{ m \in \sigma : v \mid m \}$. First, consider a $\sigma \in M_{r,d}(I)$ with $|\sigma_v|=1$. This means there exists exactly one $v v_i$ in $\sigma$ for some $i \in \{ 1, \dots , n\}$. By \cref{corollary:I_I'_critical}, $\sigma \setminus \{v v_i\} \in M_{r-1,d-2}(I')$. There exist exactly $n$ subsets of $\mathcal{G}(I)$ in $M_{r,d}(I)$ with $|\sigma_v| = 1$, each corresponding to a distinct choice of $v_i$ for $i \in \{ 1, \dots , n \}$. In general, for each $i \in \{1, \dots, n\}$ and $\sigma' \in M_{r-i,d-i-1}(I')$, there are exactly $\binom{n}{i}$ subsets of $M_{r,d}(I)$ with $\sigma \setminus \sigma_v = \sigma'$. Therefore, using \cref{corollary:I_I'_critical} again we obtain the required formula:
            \begin{equation*}
				\beta_{r,d} \left( \frac{R}{NI(T)} \right) = \beta_{r,d} \left( \frac{R}{NI(T')} \right) + \sum\limits_{i=1}^{n} \binom{n}{i} \beta_{r-i,d-i-1} \left( \frac{R}{NI(T')} \right).
			\end{equation*}
		\end{proof}
		
		In \cref{fig:tree_special_case}, the crucial feature is that $u$ has a leaf child. Otherwise, $m(T,u) \in \mathcal{G}(NI(T))$ and $v \in \Su(m(T,u))$, in which case Corollary \ref{corollary:I_I'_critical} would no longer be applicable.
		For example, for the path graph $P_7$,
		\[
		4 = \beta_{3,6} \left( \frac{R}{NI(P_7)} \right) \neq \beta_{3,6} \left( \frac{R}{NI(P_5)} \right) +  \beta_{2,4} \left( \frac{R}{NI(P_5)} \right) = 0 + 3.
		\]
		
		This observation shows that our current approach does not extend \Cref{theorem:tree_betti} to all trees. Consequently, the following natural and compelling problem remains open.
		
		\begin{question}
			Is there a recursive formula for the Betti numbers of the closed neighborhood ideals of trees?
		\end{question}

		\subsection{Bridge-friendly condition for chordal and bipartite graphs}Two well-known and important generalisations of trees are the classes of chordal graphs and bipartite graphs. In view of \Cref{thm:Bm of trees}, it is natural to ask whether the closed neighborhood ideals of chordal graphs and bipartite graphs also satisfy the bridge-friendly condition.
		
		For closed neighborhood ideals of chordal graphs, our computations with SageMath\cite{Sage} show that every chordal graph with $8$ or fewer vertices admits a bridge-friendly ordering of the generators. On the other hand, the graph displayed in \cref{fig:nonbridgefriendly_chordal} is the only chordal graph with nine vertices for which no ordering of $NI(G)$ is bridge-friendly. In fact, one can show directly in the following way that for any ordering of the minimal generators of $\G(NI(G))$, the ideal is not bridge-friendly:
		
		\begin{figure}[h]
			\centering
			\begin{tikzpicture}
				[scale=0.70,every node/.style={circle, draw, fill=blue!20, inner sep=0.8pt}]
				
				\node (x7) at (0.5,-1) {$x_7$};
				\node (x5) at (2,0) {$x_5$};
				\node (x4) at (4,0) {$x_4$};
				\node (x6) at (6,0) {$x_6$};
				\node (x8) at (7.5,-1) {$x_8$};
				\node (x2) at (3,1.5) {$x_2$};
				\node (x3) at (5,1.5) {$x_3$};
				\node (x1) at (4,3) {$x_1$};
				\node (x0) at (4,4.5) {$x_0$};
				
				\draw (x7) -- (x5) -- (x4) -- (x6) -- (x8);
				\draw (x5) -- (x2) -- (x1) -- (x0);
				\draw (x6) -- (x3) -- (x1);
				\draw (x2) -- (x3) -- (x4) -- (x2);
				
			\end{tikzpicture}
			\caption{Non Bridge Friendly Chordal Graph}
			\label{fig:nonbridgefriendly_chordal}
		\end{figure}
		
		We have
		\[
		\G(NI(G))=\{x_0x_1,x_5x_7,x_6x_8,x_1x_2x_3x_4x_5,x_1x_2x_3x_4x_6,x_2x_3x_4x_5x_6\}.
		\]
		Let $>$ be an arbitrarily chosen but fixed ordering on $\G(NI(G))$. Without loss of generality, let $x_1x_2x_3x_4x_5>x_1x_2x_3x_4x_6>x_2x_3x_4x_5x_6$. Take 
		\begin{align*}
			\sigma&=\{x_0x_1,x_1x_2x_3x_4x_6,x_2x_3x_4x_5x_6\},\\
			\tau&=\{x_0x_1,x_1x_2x_3x_4x_5,x_2x_3x_4x_5x_6\}.
		\end{align*}
		Observe that the generators $x_5x_7$ and $x_6x_8$ can never be a gap of either $\sigma$ or $\tau$. Furthermore, $x_1x_2x_3x_4x_6$ (respectively, $x_1x_2x_3x_4x_5$) is a bridge of $\sigma$ (respectively, $\tau$). Now, for $\sigma$, the monomial $x_1x_2x_3x_4x_5$ is a gap. However, since $x_1x_2x_3x_4x_5>x_1x_2x_3x_4x_6$, we see that $\sigma$ is a potentially type-2 subset of $\G(NI(G))$. As for $\tau$, we observe that $x_1x_2x_3x_4x_6$ is a gap of $\tau$, and $x_1x_2x_3x_4x_5>x_1x_2x_3x_4x_6$. We proceed to show that $x_1x_2x_3x_4x_6$ is not a true gap of $\tau$. Indeed, $\tau\cup\{x_1x_2x_3x_4x_6\}$ has a new bridge $x_2x_3x_4x_5x_6$ dominated by $x_1x_2x_3x_4x_6$, and thus, $x_1x_2x_3x_4x_6$ cannot be a true gap of $\tau$. Therefore, $\tau$ is a potentially type-2 subset of $\G(NI(G))$. Now observe that
		\[
		\sigma\setminus\mathrm{sb}(\sigma)=\tau\setminus\mathrm{sb}(\tau)=\{x_0x_1, x_2x_3x_4x_5x_6\},
		\]
		where $\mathrm{sb}(\sigma)$ (respectively, $\mathrm{sb}(\tau)$) denotes the smallest bridge of $\sigma$ (respectively, $\tau$). Thus, $\tau$ is a potentially type-2 subset of $\G(NI(G))$ which is not type-2. Hence, $NI(G)$ is not a bridge-friendly ideal.
		
		The previous example demonstrates that not all chordal graphs are bridge-friendly. This naturally leads to the following question.
		
		\begin{question}
			Classify all chordal graphs that are bridge-friendly.
		\end{question}
	
        In case of bipartite graphs, computations in SageMath \cite{Sage} shows that the closed neighborhood of the graph in Figure \ref{fig:bipartite} is not bridge-friendly for any ordering of its generators. Therefore, the same question could be asked for bipartite graphs as well. 
			
			\begin{figure}[h]
				\centering
				\begin{tikzpicture}
					[scale=0.70,every node/.style={circle, draw, fill=blue!20, inner sep=0.8pt}]
					
					\node (x5) at (4,2) {$x_5$};
					\node (x4) at (2,2) {$x_4$};
					\node (x3) at (0,2) {$x_3$};
					\node (x2) at (4,0) {$x_2$};
					\node (x1) at (2,0) {$x_1$};
					\node (x0) at (0,0) {$x_0$};
					
					\draw (x0) -- (x1) -- (x4) -- (x3) -- (x0);
					\draw (x1) -- (x2) -- (x5) -- (x4);
					
				\end{tikzpicture}
				\caption{Non Bridge Friendly Bipartite Graph}
				\label{fig:bipartite}
			\end{figure}

		For Cycle Graphs, SageMath\cite{Sage} computations show that the closed neighborhood ideal of $C_{5}$ is not bridge-friendly for any ordering of its generators.  Furthermore, we know that the closed neighborhood ideal of a cycle coincides with its $3$-path ideal, and by \cite[Theorem~46]{chau2024path}, this ideal admits a minimal generalised BM resolution. Hence, it would be interesting to determine whether the closed neighborhood ideal of a general bipartite graph always admits a minimal generalised BM resolution.
		
		\subsection{Regularity of the closed neighborhood ideals of trees:} Recall that the Castelnuovo-Mumford regularity (in short, the \textit{regularity}), denoted by $\reg(R/I)$, is defined as \[ \reg(R/I) = max\{ d-r : \beta_{r,d}(R/I) \neq 0\text{ for some }r,d\}. \]
       According to \cite[Theorem 3.4]{CJRS2025}, the regularity of the closed neighborhood ideal of a tree is equal to the matching number of the tree. Moreover, it follows from \Cref{critical cell betti pd} that 
		\[
		\reg \left( \frac{R}{NI(G)} \right) = \max 
		\left\{ 
		d - r \;:\;
		\begin{aligned}
			&\sigma \subseteq \G(NI(G)) \text{ is critical} ,\\
			&|\sigma| = r,\ \deg(\lcm(\sigma)) = d
		\end{aligned}
		\right\}.
		\]
		Thus, it would be of interest to obtain a combinatorial proof of the fact that the regularity of the closed neighborhood ideal of a tree is the same as its matching number, leveraging the descriptions of the critical cells, analogous to the results presented in \Cref{pd tree section}.
		\section*{Acknowledgements}
		Part of this project was initiated during the first author's visit to the second author at the Chennai Mathematical Institute. The authors sincerely thank Priyavrat Deshpande for facilitating this visit and the Chennai Mathematical Institute for the excellent working environment.
		Ajay P. Joseph would like to express his gratitude to the National Institute of Technology Karnataka for the Doctoral Research Fellowship. Amit Roy was supported by a Postdoctoral Fellowship as well as a grant from the Infosys Foundation during his stay at the Chennai Mathematical Institute. Anurag Singh is supported by the NBHM (National Board for Higher Mathematics) project grant 02011/25/2025/NBHM(R.P.)/9827.
		
		\subsection*{Data availability statement} Data sharing is not applicable to this article as no new data were created or
		analyzed in this study.
		
		\subsection*{Conflict of interest} The authors declare that they have no known competing financial interests or personal
		relationships that could have appeared to influence the work reported in this paper.

		\bibliographystyle{abbrv}
		\bibliography{ref}
		
	\end{document}